\documentclass[11pt]{amsart}
\usepackage[english]{babel}
\usepackage{amssymb}
\usepackage{amsmath}
\usepackage{srcltx}
\usepackage{lscape}

\thispagestyle{empty}

\textheight 21.5cm
\textwidth 14cm \topmargin -0.6cm \oddsidemargin 1cm \evensidemargin
1cm

\newtheorem{dummy}{Dummy}

\newtheorem{theorem}[dummy]{Theorem}
\newtheorem{proposition}[dummy]{Proposition}
\newtheorem{corollary}[dummy]{Corollary}

\theoremstyle{definition}

\newtheorem{example}[dummy]{Example}

\newtheorem{remark}[dummy]{Remark}

\newcommand{\ignore}[1]{}

\author{S. Pumpl\"un}

\email{susanne.pumpluen@nottingham.ac.uk}
\address{School of Mathematical Sciences\\
University of Nottingham\\
University Park\\
Nottingham NG7 2RD\\
United Kingdom
}

%\keywords{Semifield, finite loop, regular Latin square, automorphisms, Jha-Johnson semifield.}

\subjclass[2010]{Primary:20N05; Secondary:12K10, 05B15}

\begin{document}

\title[The multiplicative loops of Jha-Johnson semifields]
{The multiplicative loops of Jha-Johnson semifields}

\begin{abstract}
The multiplicative loops  of  Jha-Johnson semifields are
 non-automorphic finite loops whose left and right nuclei are the multiplicative groups of a field extension
of their centers. They yield examples of finite loops with non-trivial automorphism group and non-trivial
 inner mappings. Upper bounds are given for the number
 of non-isotopic multiplicative loops of order $q^{nm}-1$ that are defined using the twisted polynomial ring
 $K[t;\sigma]$ and a twisted irreducible polynomial of degree $m$, when the automorphism $\sigma$ has order $n$.
\end{abstract}

\maketitle

\section*{Introduction}

 For loops the role of automorphisms  is not the same as for groups, as inner mappings need not be automorphisms, and
almost all finite loops have a trivial automorphism group \cite{MW}.
 One systematic way to find finite loops with a non-trivial automorphism group is to find semifields
 (i.e., finite unital nonassociative division algebras) with a non-trivial
 automorphism group, as their  multiplicative loops clearly will also have  non-trivial automorphisms.
 The automorphism groups of semifields, however,
 have not been studied in great detail so far, mostly because it is usually important to understand semifields only up to
  isotopy. The multiplicative loops and multiplication groups of semifields were considered for instance in \cite{CJ, NG, NG2}.

We look at the  multiplicative loops of
a family of Jha-Johnson semifields, which include Sandler and Hughes-Kleinfeld semifields as special cases.
These multiplicative loops yield examples of finite loops with both non-trivial automorphism groups and non-trivial
inner automorphisms, which
 form non-trivial subgroups of their inner mapping groups.

 For their definition, let $K=\mathbb{F}_{q^n}$ be a finite field, $\sigma$  an automorphism of $K$ of order $n$ with fixed field
$F=\mathbb{F}_{q}$,  and $R = K[t;\sigma]$ a twisted polynomial ring. Given an irreducible monic
twisted polynomial $f\in R = K[t;\sigma]$ of degree $m\geq 2$ that is not right-invariant
(i.e., $ Rf$ is not a two-sided ideal),
the set of all twisted polynomials in $R$ of degree less than $m$, together with the usual addition of polynomials, can be equipped with a nonassociative
ring structure using right division by $f$ to define the multiplication as $g\circ h=gh \,\,{\rm mod}_r f $,
employing the remainder of dividing by $f$ on the right.
The resulting nonassociative unital ring $S_f$, also denoted by $K[t;\sigma]/K[t;\sigma]f$,
is a proper semifield over its center $F$ of order  $q^{mn}$ and is an example of a \emph{Jha-Johnson semifield}. Its left and middle nucleus is
$K= \mathbb{F}_{q^n}$ and its right nucleus is isomorphic to $\mathbb{F}_{q^m}.$
 In particular, \emph{Sandler semifields} \cite{San62} are obtained by choosing $n\geq m$ and
 $f(t) = t^m - a\in K[t; \sigma]$ with $a \in K \setminus F$,
 and \emph{Hughes-Kleinfeld semifields}  by choosing $f(t)= t^2-a_1 t-a_0$ \cite{HK}, \cite{Kn65}.

This nonassociative algebra construction was initially introduced in a more general setup by Petit \cite{P66}. It was used for the first time
for semifields by Wene  \cite{W00} and then more recently  by Lavrauw and Sheekey \cite{LS}.

The structure of the paper is as follows: in Section \ref{sec:prel},
we introduce the basic terminology and  the algebras $S_f$.
We then determine some subgroups of automorphism groups of the multiplicative loops $L_f=S_f\setminus \{0\}$ of semifields $S_f$ in Section 2
by restricting
the automorphisms of $S_f$  to $L_f $. We give examples of loops of order $q^{mn}-1$ with multiplicative subgroups of order $m$.
The inner automorphisms of the loops $L_f $ are treated in Section \ref{sec:inner} and
certain types of Sandler semifields (also called \emph{nonassociative  cyclic algebras} due to the similarity in their construction and behaviour
 to associative
cyclic algebras) are studied in Section \ref{sec:nonasscyclic}:
their loops have nontrivial automorphism groups containing a cyclic subgroup of order $s =(q^m-1)/(q-1)$
of inner automorphisms, i.e. middle inner mappings. Upper bounds for the number of isotopic loops obtained as
multiplicative loops of Jha-Johnson semifields are briefly considered in Section \ref{sec:isotopies}.

Since the multiplication tables of finite loops yield regular Latin squares,
 all results  can be carried over to regular Latin squares if desired.

\section{Preliminaries} \label{sec:prel}

%%%%%%%%%%%%%%%%%%%%%%%%%%%%%%%%%%%%%%%%%%%%%%%%%%%%%%%%%%%%%%%%%%%%%%%%%%%%%%%%%%%%%%%%%%

\subsection{Nonassociative algebras} \label{subsec:nonassalgs}

%%%%%%%%%%%%%%%%%%%%%%%%%%%%%%%%%%%%%%%%%%%%%%%%%%%%%%%%%%%%%%%%%%%%%%%%%%%%%%%%%%%%%%%

Let $F$ be a field and let $A$ be an $F$-vector space. $A$ is an
\emph{algebra} over $F$ if there exists an $F$-bilinear map $A\times
A\to A$, $(x,y) \mapsto x \cdot y$, denoted  by juxtaposition
$xy$, the  \emph{multiplication} of $A$. We will only consider unital algebras which have a unit element
 denoted by 1, such that $1x=x1=x$ for all $x\in A$.

The {\it associator} of $A$ is given by $[x, y, z] =
(xy) z - x (yz)$. The {\it left nucleus} of $A$ is defined as ${\rm
Nuc}_l(A) = \{ x \in A \, \vert \, [x, A, A]  = 0 \}$, the {\it
middle nucleus} of $A$ as ${\rm Nuc}_m(A) = \{ x \in A \, \vert \,
[A, x, A]  = 0 \}$ and  the {\it right nucleus} of $A$ as
${\rm Nuc}_r(A) = \{ x \in A \, \vert \, [A,A, x]  = 0 \}$. ${\rm Nuc}_l(A)$, ${\rm Nuc}_m(A)$, and ${\rm Nuc}_r(A)$ are associative
subalgebras of $A$. Their intersection
 ${\rm Nuc}(A) = \{ x \in A \, \vert \, [x, A, A] = [A, x, A] = [A,A, x] = 0 \}$ is the {\it nucleus} of $A$.
${\rm Nuc}(A)$ is an associative subalgebra of $A$ containing $F1$
and $x(yz) = (xy) z$ whenever one of the elements $x, y, z$ is in
${\rm Nuc}(A)$.   The
 {\it center} of $A$ is ${\rm C}(A)=\{x\in A\,|\, x\in \text{Nuc}(A) \text{ and }xy=yx \text{ for all }y\in A\}$.

An algebra $A$ is called a \emph{division algebra} if for any
$a\in A$, $a\not=0$, the left multiplication  with $a$, $L_a(x)=ax$,
and the right multiplication with $a$, $R_a(x)=xa$, are bijective.
 If $A$ has finite dimension over $F$, then $A$ is a division algebra if
and only if $A$ has no zero divisors \cite[ pp. 15, 16]{Sch}.
An element $0\not=a\in A$ has a \emph{left inverse} $a_l\in A$, if
$R_a(a_l)=a_l a=1$, and a \emph{right inverse}
 $a_r\in A$, if $L_a(a_r)=a a_r=1$.

 A \emph{semifield} is a finite-dimensional unital division algebra over a finite field.
 It is called \emph{proper} if it is not associative.

An automorphism $G\in {\rm Aut}_F(A)$ is an \emph{inner automorphism}
if there is an element $m\in A$ with left inverse $m_l$ such
that $G(x)= (m_lx)m$ for all $x\in A$. We denote this automorphism by $ G_m$.
The set of inner automorphisms $\{G_m\,|\, m\in {\rm Nuc}(A) \text{ invertible} \}$ is a subgroup of ${\rm Aut}_F(A)$
\cite[Lemma 2, Theorem 3]{W09}.

\subsection{Loops}\label{subsec:loops}

A \emph{loop} $L$ is a quasigroup with a unit element, i.e. a set $L$ together with a multiplicative structure
$\cdot:L\times L\longrightarrow L$ such that two of the elements $x,y,z\in L$ in
$x\cdot y=z$ uniquely  determine the third one.
The left and right multiplication maps are the bijections $L_a:x\mapsto ax$, $R_a:x\mapsto xa$ on $L$.
The \emph{multiplication group} ${\rm Mlt}(L)$ is the permutation group generated by the left and right multiplication maps of $L$.

We define the middle,  left and right  inner  mappings on $L$ as
 $T_x=L_x^{-1}R_x$, $L_{x,y}=L_{yx}^{-1}L_yL_x$ and $R_{x,y}=R_{xy}^{-1}R_yR_x$.  The inner mapping $T_x$ plays the role of conjugation and measures how commutative the loop is, and the mappings
$L_{x,y}$ and $R_{x,y}$ measure deviations from associativity.
The \emph{ inner mapping group} is defined as ${\rm Inn}(L)=\{f\in {\rm Mlt}(L)\,|\, f(1)=1\}$ and it is well-known that
${\rm Inn}(L)=\langle L_{x,y}, R_{x,y},T_x\,|\, x,y\in L\rangle$.

The set $A^\times$ of non-zero elements of any division algebra $A$ is a multiplicative loop. Thus examples of
finite loops canonically appear as the multiplicative loops of semifields.
The center, left, right and middle nucleus of the multiplicative loop of a semifield
(defined analogously as for an algebra) are cyclic groups.
If $A$ is a semifield with center $F\cdot 1=\mathbb{F}_q\cdot 1$ and
 $q^r$ elements, then its
multiplicative loop has $q^r-1$ elements and for $r\geq 3$, the multiplication group ${\rm Mlt}(A^\times)$
 is a transitive subgroup of  ${\rm GL}(r,q)$ \cite[Proposition 5.1]{NG2}.
 Furthermore,
$${\rm SL}(r,q)\leq {\rm Mlt}(A^\times)\leq {\rm GL} (r,q)$$
(\cite[Proposition 5.2]{NG2} or \cite[Proposition 2.2]{NG2}), so that
$$q^{r(r-1)/2}(q^r-1)(q^{r-1}-1)\cdots(q^2-1)\leq| {\rm Mlt}(A^\times)|\leq q^{r(r-1)/2}(q^r-1)(q^{r-1}-1)\cdots(q-1).$$
In particular, for $q=2$, we get
$|{\rm Mlt}(A^\times)|=2^{r(r-1)/2}(2^r-1)(2^{r-1}-1)\cdots(2^2-1).$

For a semifield $A$,
$A^\times$ is  \emph{left cyclic} (or \emph{left primitive}) if
$A^\times=\{a,a^{(2},a^{(3},\dots, a^{(s-1}\}$
 where the left  principal
powers $a^{(n}$ of $a$ are defined as $a^{(1}=a$, $a^{(n}=a^{(n-1}a$, and
\emph{right cyclic} (or \emph{right primitive}) if
$A^\times=\{a,a^{2)},a^{3)},\dots, a^{s-1)}\}$
 where the the right principal
powers $a^{n)}$ of $a$ are defined as $a^{1)}=a$, $a^{n)}=aa^{n-1)}$.
For instance, the multiplicative loops of semifields of order 16, 27, 32, 125 and 343 are all right cyclic.
Semifields of order 81 are left and right cyclic \cite{HR}.

A loop $L$ has the \emph{weak Lagrange property} if for each subloop $M$ of $L$, $|M|$ divides $|L|$, and the
\emph{strong Lagrange property}
if every subloop $M$ of $L$ has the weak Lagrange property.
(A loop may have the weak Lagrange property but not the strong one.)

In general, the structure of the multiplicative loop of a semifield is not known.

%%%%%%%%%%%%%%%%%%%%%%%%%%%%%%%%%%%%%%%%%%%%%%%%%%%%%%%%%%%%%%%%%%%%%%%%%

\subsection{Semifields obtained from skew polynomial rings} \label{subsec:structure}

%%%%%%%%%%%%%%%%%%%%%%%%%%%%%%%%%%%%%%%%%%%%%%%%%%%%%%%%%%%%%%%%%%%%%%%%%%%

Let $K$ be a field and $\sigma$ an automorphism of $K$. The \emph{twisted polynomial ring} $R=K[t;\sigma]$
is the set of polynomials $a_0+a_1t+\dots +a_nt^n$ with $a_i\in K$,
where addition is defined term-wise and multiplication by
$ta=\sigma(a)t $ for all $a\in K$ \cite{O1}.
For $f=a_0+a_1t+\dots +a_mt^m$ with $a_m\not=0$ define ${\rm
deg}(f)=m$ and put ${\rm deg}(0)=-\infty$. Then ${\rm deg}(fg)={\rm deg}
(f)+{\rm deg}(g).$
 An element $f\in R$ is \emph{irreducible} in $R$ if it is not a unit and  it has no proper factors,
 i.e if there do not exist $g,h\in R$ with
 ${\rm deg}(g),{\rm deg} (h)<{\rm deg}(f)$ such
 that $f=gh$.

 $R=K[t;\sigma]$ is a left and right principal ideal domain
 and there is a right division algorithm in $R$: for all
$g,f\in R$, $f\not=0$, there exist unique $r,q\in R$ with ${\rm
deg}(r)<{\rm deg}(f)$, such that $g=qf+r$ \cite{J96}.

Let $f\in R=K[t;\sigma]$ have degree $m\geq 1$. Let
${\rm mod}_r f$ denote the remainder of right division by $f$. Then the additive abelian group
$$R_m=\{g\in K[t;\sigma]\,|\, {\rm deg}(g)<m\}$$
 together with the multiplication
 $g\circ h=gh \,\,{\rm mod}_r f $
 is a unital nonassociative algebra $S_f=(R_m,\circ)$ over
 $F=\{a\in K\,|\, ah=ha \text{ for all } h\in S_f\}$,
which is a subfield of $K$ \cite[(7)]{P66}.
  $S_f$ is also denoted by $R/Rf$
 if we want to make clear which ring $R$ is involved in the construction.
  In the following, we  denote the multiplication  by  juxtaposition.
  Note that when ${\rm deg}(g){\rm deg}(h)<m$, the multiplication of $g$ and $h$ in $S_f$ and hence in $L_f$
 is the same as the multiplication of $g$ and $h$ in $R$ \cite[(10)]{P66}.

We  assume throughout the paper that
$$f(t) = t^m -\sum_{i=0}^{m-1} a_i t^i \in K[t;\sigma]$$
has degree $ m \geq 2$, since if $f$ has degree $1$ then  $S_f\cong K$. We also assume that $\sigma\not=id$.
W.l.o.g. we may consider only monic $f(t)$, since $S_f= S_{af}$ for all $a\in K^\times$.

From now on,
$$K = \mathbb{F}_{q^n}$$
is a finite field,  $q = p^r$ for some prime $p$, $\sigma$ is an automorphism of $K$ of order $n>1$ and thus
$$F={\rm Fix}(\sigma)=\mathbb{F}_{q},$$
 i.e. $K/F$ is a cyclic Galois extension of degree $n$
with ${\rm Gal}(K/F)=\langle\sigma\rangle.$  The norm  $N_{K / F} : K^{\times} \rightarrow F^{\times}$ is surjective, and
${\rm ker}(N_{K/F})$ is a cyclic group of order
$s =(q^n-1)/(q-1).$
Then $S_f$ is a semifield if and only if $f$ is irreducible, and a proper semifield
 if and only if $f$ is not right-invariant (i.e., the left ideal $Rf$ generated by $f$ is not two-sided),
 cf. \cite[(2), p.~13-03, (5), (9)]{P66}, or \cite{LS}.
 We will hence assume from now on that $f$ is not
 right-invariant and irreducible. Then $S_f$ is a \emph{ Jha-Johnson semifield}.
  Moreover, each Jha-Johnson semifield is isotopic to some such algebra $S_f$ \cite[Theorem 16]{LS}, so in particular,
  each multiplicative loop of a Jha-Johnson semifield is isotopic to the  multiplicative loop of some algebra $S_f$.

   Let $L_f=S_f\setminus \{0\}$
  denote the multiplicative loop of the proper semifield $S_f$. Then
  $$|L_f|=q^{mn}-1, \quad {\rm Nuc}_l(L_f)={\rm Nuc}_m(L_f)=K^\times= \mathbb{F}_{q^n}^\times$$
 and ${\rm Nuc}_r(S_f)=\{g\in R_m\,|\, fg\in Rf\}\cong\mathbb{F}_{q^m}$
 implies
 ${\rm Nuc}_r(L_f)\cong\mathbb{F}_{q^m}^\times.$

\begin{proposition} (\cite[Proposition 3]{BP})\label{prop:subfields}
 Let $f(t) \in F[t]=F[t;\sigma]\subset K[t;\sigma]$ be monic, irreducible and not right-invariant.
 Then $ {\rm Nuc}_r(L_f)\cong F[t]/(f(t))^\times.$
In particular, ${\rm Nuc}(L_f)=F^\times\cdot 1$ and  $ft\in Rf$.
\end{proposition}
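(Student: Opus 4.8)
The plan is to exploit the special hypothesis $f\in F[t]$. Since $F={\rm Fix}(\sigma)$, the variable $t$ commutes with every element of $F$ (we have $ta=\sigma(a)t=at$ for $a\in F$), so $F[t]=F[t;\sigma]$ is an \emph{ordinary commutative} polynomial ring sitting inside $R=K[t;\sigma]$, and $f$ is central in this subring. This single observation drives everything. In particular $t$ commutes with $f$, i.e. $ft=tf$, and since $tf\in Rf$ trivially, this already gives the last assertion $ft\in Rf$.

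Next I would pin down the right nucleus. Recall from the preliminaries that ${\rm Nuc}_r(S_f)=\{g\in R_m\,|\, fg\in Rf\}$. For any $g\in F[t]$ of degree less than $m$, both $f$ and $g$ lie in the commutative ring $F[t]$, so $fg=gf\in Rf$; hence the image $V$ of $\{g\in F[t]\,|\,{\rm deg}(g)<m\}$ is contained in ${\rm Nuc}_r(S_f)$. Now $V$ is an $F$-subspace with exactly $q^m$ elements, while ${\rm Nuc}_r(S_f)\cong\mathbb{F}_{q^m}$ also has $q^m$ elements; an inclusion of finite sets of equal cardinality is an equality, so ${\rm Nuc}_r(S_f)=V$. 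It then remains to see that the $S_f$-multiplication restricted to $V$ is the quotient-ring multiplication of $F[t]/(f(t))$: for $g,h\in V$ the product $gh$ computed in $R$ agrees with the product in $F[t]$, and ordinary division $gh=Qf+r$ in $F[t]$ (with ${\rm deg}(r)<m$) is also a valid right division in $R$, so by uniqueness of the remainder $g\circ h=gh\,{\rm mod}_r f=r$ equals the product in $F[t]/(f(t))$. (Here $f$ is irreducible in $R$, hence a fortiori irreducible in $F[t]$, so $F[t]/(f(t))$ is indeed the field $\mathbb{F}_{q^m}$.) This yields the $F$-algebra isomorphism ${\rm Nuc}_r(S_f)\cong F[t]/(f(t))$ and therefore ${\rm Nuc}_r(L_f)\cong F[t]/(f(t))^\times$.

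Finally, for ${\rm Nuc}(L_f)=F^\times\cdot 1$ I would intersect the three nuclei. Since ${\rm Nuc}_l(L_f)={\rm Nuc}_m(L_f)=K^\times$ consists of the nonzero constant polynomials and ${\rm Nuc}_r(L_f)=V\setminus\{0\}$ consists of the nonzero polynomials of degree less than $m$ with coefficients in $F$, a constant polynomial lies in all three nuclei precisely when its constant term lies in $K\cap F=F$; hence ${\rm Nuc}(L_f)=F^\times\cdot 1$. I expect the only point needing genuine care to be the upgrade from the easy inclusion $V\subseteq{\rm Nuc}_r(S_f)$ to an equality, together with the identification of the induced product, i.e. checking that right division by $f$ in $R$ of an element of $F[t]$ reproduces ordinary division in $F[t]$; everything else reduces to the centrality of $f$ in the commutative subring $F[t]$.
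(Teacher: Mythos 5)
Your proof is correct and is essentially the expected argument: the paper itself gives no proof here (the statement is quoted from \cite[Proposition 3]{BP}), and your route --- $fg=gf\in Rf$ for $g\in F[t]$ by commutativity of $F[t]=F[t;\sigma]$, upgrading the inclusion $V\subseteq{\rm Nuc}_r(S_f)=\{g\in R_m\,|\,fg\in Rf\}$ to equality by counting against the preliminaries' fact ${\rm Nuc}_r(S_f)\cong\mathbb{F}_{q^m}$, and identifying the induced product with that of $F[t]/(f(t))$ via uniqueness of the right-division remainder in $R$ --- matches the standard proof behind the cited result. Note also that your counting step is not circular, since ${\rm Nuc}_r(S_f)\cong\mathbb{F}_{q^m}$ is stated in the paper for arbitrary irreducible, non-right-invariant $f\in K[t;\sigma]$, independently of the hypothesis $f\in F[t]$.
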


\begin{remark} \label{re:t}
(i) The powers of $t$ are associative if and only if $t^mt=tt^m$ in $S_f$
 if and only if $t\in {\rm Nuc}_r(S_f)$ if and only if $ft\in Rf$ \cite[(5)]{P66}.
  Note that the associativity of the powers of $t$ does not imply that $S_f$ is a field.
 \\ (ii) The powers of $t$ form a multiplicative group in $S_f$ if and only if  $f(t) \in F[t]$ \cite[(16) or (20)]{P66}.
\\ (iii)
Note that  $f(t)\in K[t;\sigma]\setminus F[t;\sigma]$ is never
right-invariant and that if $f(t)\in  F[t]$ has degree $m<n$, then $f(t)\in K[t;\sigma]$ is also
never right-invariant in $K[t;\sigma]$.
 For $n=m$ the only right-invariant $f(t)\in  F[t]$ are of the form $f(t)=t^m-a$,
 and these polynomials are reducible. Hence all irreducible polynomials in $F[t]$ of degree $n$ are not right-invariant
  in $K[t;\sigma]$.
\end{remark}

By \cite[Lemma 10]{Ki}, we have
$$|{\rm Mlt}(L_f)|=|L_f||{\rm Inn}(L_f)|=(q^{mn}-1)|{\rm Inn}(L_f)|.$$
We will focus on those Jha-Johnson semifields which are algebras $S_f$, and apply the results from \cite{BP}.
We  concentrate on the case that $n\geq m-1$. For $n < m-1$ similar but weaker results can be obtained.

%%%%%%%%%%%%%%%%%%%%%%%%%%%%%%%%%%%%%%%%%%%%%%%%%%%%%%%%%%%%%%%%%%%%%%%%%%%%%%%%%%%%%%%%%%%
%
% The automorphisms of the multiplicative loops of $S_f$
%
%%%%%%%%%%%%%%%%%%%%%%%%%%%%%%%%%%%%%%%%%%%%%%%%%%%%%%%%%%%%%%%%%%%%%%%%%%%%%%%%%%%%%%%%%%%%%%%

\section{The automorphisms of the multiplicative loops of $S_f$}

Let $f(t) = t^m -\sum_{i=0}^{m-1} a_i t^i \in K[t;\sigma]$ be irreducible and not right-invariant.
Let $L_f$ be the multiplicative loop of $S_f$.
We obtain the following necessary conditions for a loop automorphism of $L_f$:

\begin{proposition}
Let $H\in {\rm Aut}(L_f)$ be a loop automorphism.
\\ (i) $H \vert_{K^\times} = \tau$ for some $\tau \in \text{Aut}(K^\times)$.
\\ (ii) If $\sigma$ has order $n\geq m-1$ then $H(t) = kt$ for some $k \in K^{\times}$.
\\ (iii) If $\sigma$ has order $n\geq m-1$ then for all $i\in \{ 1, \ldots, m-1 \}$ and $z \in K^\times$,
\begin{equation*}
H(z t^i) = H(z)H(t)^i = \tau(z) (kt)^i = \tau(z) \Big(\prod_{l=0}^{i-1} \sigma^l(k) \Big) t^i.
 \end{equation*}
\\ (iv) If $\sigma$ has order $n\geq m-1$ then
$$H(t^i) = H(t)^i = (kt)^i =  \Big(\prod_{l=0}^{i-1} \sigma^l(k) \Big) t^i$$
for all $i\in \{ 1, \ldots, m\}$. In particular,
$H \Big( \sum_{i=0}^{m-1} a_i t^i \Big) =k \sigma(k) \cdots \sigma^{m-1}(k)\sum_{i=0}^{m-1} a_i t^i.$
\end{proposition}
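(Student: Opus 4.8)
The plan is to use two structural facts repeatedly: a loop automorphism must permute the characteristic subloops of $L_f$ (in particular the three nuclei), and the multiplication of $S_f$ agrees with that of $R=K[t;\sigma]$ as long as no intermediate degree reaches $m$, so that the twisting rule $t^ia=\sigma^i(a)t^i$ is available. For (i) I would argue that the left nucleus is characteristic, so $H$ maps $\mathrm{Nuc}_l(L_f)=K^\times$ onto itself; since $H$ preserves the loop product and this product restricts to the field multiplication on $K^\times$, the restriction $\tau=H|_{K^\times}$ is a group automorphism of $K^\times$, which is (i).

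For (ii), the crux of the proposition, I would write $H(t)=\sum_{i=0}^{m-1}b_it^i$ and exploit the relation $t\cdot z=\sigma(z)t$, valid in $L_f$ for every $z\in K^\times$. Applying $H$ and using that $z,\sigma(z)\in K^\times$ lie in the nucleus (so products with them are genuine loop products) gives $H(t)\,\tau(z)=\tau(\sigma(z))\,H(t)$. Expanding both sides via $t^i\tau(z)=\sigma^i(\tau(z))t^i$, with all terms of degree $<m$, and comparing the coefficient of $t^i$ yields $b_i\,\sigma^i(\tau(z))=\tau(\sigma(z))\,b_i$ for all $z\in K^\times$; hence $b_i\neq0$ forces $\sigma^i\tau=\tau\sigma$ on $K^\times$. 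Since $K^\times$ is cyclic, its automorphism group is abelian, so $\sigma$ and $\tau$ commute and the condition collapses to $\sigma^{i-1}=\mathrm{id}$, i.e.\ $n\mid(i-1)$. For $0\le i\le m-1$ together with $n\ge m-1$ the only admissible index is $i=1$, so $H(t)=b_1t=:kt$ with $k\in K^\times$. This is the step I expect to be the main obstacle: one must turn the commutation relation into a clean coefficientwise statement and then eliminate every index except $i=1$, and the hypothesis $n\ge m-1$ is exactly what pushes the next admissible index $i=1+n$ out of the range $\{0,\dots,m-1\}$.

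For (iii) and (iv), with $H(t)=kt$ in hand, I would compute $H(t^i)$ by induction. For $i\le m-1$ the product $t\cdot t^{i-1}$ needs no reduction, so $H(t^i)=H(t)H(t^{i-1})=(kt)(c_{i-1}t^{i-1})=k\,\sigma(c_{i-1})\,t^i$ where $c_{i-1}=\prod_{l=0}^{i-2}\sigma^l(k)$; the product telescopes to $c_i=\prod_{l=0}^{i-1}\sigma^l(k)$, giving $H(t^i)=\big(\prod_{l=0}^{i-1}\sigma^l(k)\big)t^i=(kt)^i$. Multiplying by $z\in K^\times$, a nucleus element, then produces $H(zt^i)=\tau(z)(kt)^i$, which is (iii). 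For the last index $i=m$ in (iv) the only difference is that $t\cdot t^{m-1}=t^m$ now reduces modulo $f$ to $\sum_{i=0}^{m-1}a_it^i$; the same computation then gives $H\big(\sum_{i=0}^{m-1}a_it^i\big)=c_m\sum_{i=0}^{m-1}a_it^i$ with $c_m=k\sigma(k)\cdots\sigma^{m-1}(k)$, the \emph{in particular} claim. These final steps should be routine once (ii) holds; the only care needed is tracking when a product crosses degree $m$ and applying the twisting rule correctly.
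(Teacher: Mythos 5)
Your proposal is correct and follows essentially the same route as the paper: (i) via preservation of the characteristic left nucleus $K^\times$, (ii) by expanding $H(t)$ in the basis, applying $H$ to $tz=\sigma(z)t$, comparing coefficients of $t^i$, and using that $\sigma,\tau$ commute (you rightly justify this via the abelian automorphism group of the cyclic group $K^\times$, which the paper leaves implicit) so that $n\geq m-1$ eliminates every index except $i=1$, and (iii)--(iv) by the same degree-tracking induction with the single reduction $t^m\equiv\sum_{i=0}^{m-1}a_it^i$ at the last step. Only a cosmetic quibble: your appeal to $z,\sigma(z)$ lying in the ``nucleus'' is unnecessary (multiplicativity of $H$ on the loop suffices, and $K^\times$ is the left and middle nucleus, not the full nucleus), but this does not affect the argument.
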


\begin{proof}
(i) Since $L_f$ is not
associative, $\text{Nuc}_l(L) = K^\times$. Since any automorphism  preserves the left nucleus,
$H(K^\times) = K^\times$ and so $H \vert_{K^\times} = \tau$ for some
 $\tau \in \text{Aut}(K^\times)$.
 \\ (ii)  Suppose $H(t) = \sum_{i=0}^{m-1} k_i t^i$ for some $k_i \in K$.
Then we have
\begin{equation} \label{eqn:automorphism_necessity_theorem 1}
H(tz) = H(t)H(z) = (\sum_{i=0}^{m-1} k_i t^i ) \tau(z) =
\sum_{i=0}^{m-1} k_i \sigma^{i}(\tau(z)) t^i \end{equation} and
\begin{equation} \label{eqn:automorphism_necessity_theorem 2} H(tz) =
H(\sigma(z)t) =H(\sigma(z))H(t)= \sum_{i=0}^{m-1} \tau(\sigma(z)) k_i t^i
\end{equation} for all $z \in K^\times$. Comparing the coefficients of $t^i$
in \eqref{eqn:automorphism_necessity_theorem 1} and
\eqref{eqn:automorphism_necessity_theorem 2} we obtain
\begin{equation} \label{eqn:automorphism_necessity_theorem 3} k_i \sigma^{i}(\tau(z)) =
k_i \tau(\sigma^i(z))= \tau(\sigma(z)) k_i=k_i\tau(\sigma(z))
\end{equation} for all $i \in\{ 0, \ldots, m-1\}$ and all $z \in K^\times$. This implies
$k_i (\tau( \sigma^i (z)) - \tau(\sigma(z)) )=0$
for all $i \in\{ 0, \ldots, m-1 \}$ and all $z \in K$ since $\sigma$
and $\tau$ commute, i.e. $k_i=0$ or $\tau( \sigma^i (z)) = \tau(\sigma(z)) $, that means (since $\tau$ is bijective)
\begin{equation} \label{eqn:automorphism_necessity_theorem 4}
k_i=0 \text{ or } \sigma^{i}(z)=\sigma(z) \end{equation} for all $i
\in\{ 0, \ldots, m-1\}$ and all $z \in K^\times$.

Since $\sigma$ has order $n\geq m-1$,  $\sigma^i\not=\sigma$ for
all $i\in\{ 0, \ldots, m-1\}$, $i\not=1$,  so
\eqref{eqn:automorphism_necessity_theorem 4} implies $k_i = 0$ for
all $ i \in\{ 0, \ldots, m-1\}$, $i\not=1$. Therefore $H(t) = kt$ for some
$k \in K^{\times}$.
\\ (iii)  For all $i\in \{ 1, \ldots, m-1 \}$ and $z \in K$,
\begin{equation*}
H(z t^i) = H(z)H(t)^i = \tau(z) (kt)^i = \tau(z) \Big(\prod_{l=0}^{i-1} \sigma^l(k) \Big) t^i. \end{equation*}
\\ (iv) By (iii) we have
$H(t^i) = H(t)^i = (kt)^i =  \Big(\prod_{l=0}^{i-1} \sigma^l(k) \Big) t^i$
for all $i\in \{ 1, \ldots, m-1 \}$. Moreover, we know that $t^m=t t^{m-1}$ and
$H(t t^{m-1})=H(t)H(t^{m-1})=H(t)H(t)^{m-1}$, i.e.
$$H(t)^m =H(t)H(t)^{m-1}= k \sigma(k) \cdots \sigma^{m-1}(k) t^{m} =
k \sigma(k) \cdots \sigma^{m-1}(k)\sum_{i=0}^{m-1} a_i t^i.$$
This gives
$H(t)^m =H(t^m) = H \Big( \sum_{i=0}^{m-1} a_i t^i \Big) =
k \sigma(k) \cdots \sigma^{m-1}(k)\sum_{i=0}^{m-1} a_i t^i.$
We obtain
$$H \Big( \sum_{i=0}^{m-1} a_i t^i \Big) =k \sigma(k) \cdots \sigma^{m-1}(k)\sum_{i=0}^{m-1} a_i t^i.$$
or in other words,
$H (t^m) =k \sigma(k) \cdots \sigma^{m-1}(k) t^m.$
\end{proof}

A closer look at the proofs of \cite[Theorem 4, 5]{BP} shows that these  can be generalized
to obtain the following statement:

 \begin{theorem} \label{thm:automorphism_of_Sf_field_case}
(i) Let  $n\geq m-1$. Then  a map $H:S_f\longrightarrow S_f$ is an
automorphism of the ring $S_f$ if and only if $H=H_{\tau , k}$ where   $\tau\in {\rm Aut}(K)$ and
$$H_{\tau , k}(\sum_{i=0}^{m-1} x_i t^i )= \tau(x_0) + \tau(x_1)kt + \tau(x_2)k\sigma(k)t^2+\cdots +
\tau(x_{m-1}) k \sigma(k) \cdots \sigma^{m-2}(k) t^{m-1},$$
with $k \in K^{\times}$ such that
\begin{equation} \label{eqn:neccessary}
\tau(a_i) = \Big( \prod_{l=i}^{m-1}\sigma^l(k) \Big) a_i
\end{equation}
for all $i \in\{ 0, \ldots, m-1\}$.
\\ (ii) Let  $n< m-1$. For all $k\in K^\times$ satisfying Equation (\ref{eqn:neccessary}) the maps $H_{\tau , k}$
from (i) are automorphisms of the ring $S_f$ and form a subgroup of ${\rm Aut}(S_f)$.
\end{theorem}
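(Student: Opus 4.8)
The plan is to prove the two implications of (i) separately and to deduce (ii) from the sufficiency direction. Throughout I use that a ring automorphism $H$ of $S_f$ preserves $1$ and maps nonzero elements to nonzero elements, hence restricts to an automorphism of the multiplicative loop $L_f$, so that the preceding Proposition applies. For necessity in (i), suppose $H$ is a ring automorphism. Since $n\ge m-1$, the preceding Proposition gives $H|_{K}=\tau$ for some $\tau\in\mathrm{Aut}(K)$ and $H(t)=kt$ for some $k\in K^\times$. Because $H$ is additive and each product $x_i\cdot t^i$ with $x_i\in K$ and $i\le m-1$ stays in degree $<m$ (so it agrees with multiplication in $R$), I would expand $H\big(\sum_i x_i t^i\big)=\sum_i \tau(x_i)\,H(t)^i=\sum_i \tau(x_i)\,c_i t^i$, where $c_i:=\prod_{l=0}^{i-1}\sigma^l(k)$, recovering the formula for $H_{\tau,k}$. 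The constraint \eqref{eqn:neccessary} then drops out by applying $H$ to the single defining relation $t^m=\sum_i a_i t^i$ of $S_f$: the left side gives $H(t)^m=c_m\sum_i a_i t^i$ with $c_m:=\prod_{l=0}^{m-1}\sigma^l(k)$, the right side gives $\sum_i \tau(a_i)c_i t^i$, and comparing coefficients of $t^i$ yields $\tau(a_i)c_i=c_m a_i$, which is exactly \eqref{eqn:neccessary}.

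For sufficiency, which is the core of the argument, I would avoid reducing products modulo $f$ by hand and instead lift to $R=K[t;\sigma]$. Given $\tau$ and $k$, the assignments $x\mapsto\tau(x)$ for $x\in K$ and $t\mapsto kt$ respect the defining relation $ta=\sigma(a)t$, since $(kt)\tau(a)=k\,\sigma\tau(a)\,t=\tau\sigma(a)\,k\,t=\tau(\sigma(a))(kt)$, using that $K$ is commutative and that $\sigma,\tau$ commute (both lie in the cyclic group $\mathrm{Aut}(K)$). By the universal property of the skew polynomial ring this extends to a ring endomorphism $\tl H$ of $R$, which is degree-preserving and bijective and restricts on $R_m=S_f$ to $H_{\tau,k}$.

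The decisive step is to show that $\tl H$ preserves the left ideal $Rf$. I would compute $\tl H(f)=c_m t^m-\sum_i \tau(a_i)c_i t^i$ and then invoke \eqref{eqn:neccessary} in the form $\tau(a_i)c_i=c_m a_i$ to obtain $\tl H(f)=c_m f$; since $c_m\in K^\times$, this gives $\tl H(Rf)\subseteq Rf$. Consequently $\tl H$ is compatible with right division by $f$: for $g,h\in S_f$ one has $gh\equiv g\circ h\pmod{Rf}$, so $\tl H(g)\tl H(h)=\tl H(gh)\equiv \tl H(g\circ h)\pmod{Rf}$, and as $\tl H(g\circ h)$ already has degree $<m$ it equals $\big(\tl H(g)\tl H(h)\big)\,{\rm mod}_r f=H_{\tau,k}(g)\circ H_{\tau,k}(h)$. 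Hence $H_{\tau,k}$ is multiplicative, and being additive and bijective it is a ring automorphism. This evaluation of $\tl H(f)$ is the one genuine obstacle: it is precisely where \eqref{eqn:neccessary} is used and where the telescoping identity $c_i\cdot\prod_{l=i}^{m-1}\sigma^l(k)=c_m$ must be verified.

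Finally, for (ii) I observe that nothing in the sufficiency argument used the hypothesis $n\ge m-1$, so for arbitrary $n$ every $k\in K^\times$ satisfying \eqref{eqn:neccessary} still yields an automorphism $H_{\tau,k}$ of the ring $S_f$. To see that these maps form a subgroup of $\mathrm{Aut}(S_f)$, I would check directly at the level of the lifts that $H_{\tau_1,k_1}\circ H_{\tau_2,k_2}=H_{\tau_1\tau_2,\;k_1\tau_1(k_2)}$ and that $H_{\tau,k}^{-1}=H_{\tau^{-1},\,\tau^{-1}(k^{-1})}$; each composite and inverse is again of the form $H_{\tau',k'}$ and is an automorphism (composites and inverses of automorphisms being automorphisms), so the set is closed and hence a subgroup.
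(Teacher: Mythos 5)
Your proof is correct and follows essentially the same route as the paper: the necessity direction is exactly the coefficient-comparison machinery of the preceding Proposition (yielding $H\vert_{K^\times}=\tau$, $H(t)=kt$ when $n\geq m-1$, and \eqref{eqn:neccessary} by applying the additive map $H$ to $t\cdot t^{m-1}=\sum_i a_it^i$), while your sufficiency step---lifting $(\tau, t\mapsto kt)$ to a ring automorphism $\widetilde{H}$ of $K[t;\sigma]$ with $\widetilde{H}(f)=\bigl(\prod_{l=0}^{m-1}\sigma^l(k)\bigr)f$, so that $Rf$ and hence right division by $f$ are preserved---is precisely the mechanism the paper invokes from the proofs of \cite[Theorems 4, 5]{BP} and records in the remark following Corollary \ref{cor:automorphism_of_Lf_field_case} (via $S_f\cong S_{G(f)}$, \cite[Theorem 7]{LS}). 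Your observation that the sufficiency argument never uses $n\geq m-1$, which gives part (ii), likewise matches the paper's treatment.
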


 \begin{corollary} \label{cor:automorphism_of_Lf_field_case}
Let $\tau\in {\rm Aut}(K) $  and $k \in K^{\times}$ satisfy  (\ref{eqn:neccessary})
for all $i \in\{ 0, \ldots, m-1\}$.
Then
$$H_{\tau , k}(\sum_{i=0}^{m-1} x_i t^i )= \tau(x_0) + \tau(x_1)kt + \tau(x_2)k\sigma(k)t^2+\cdots +
\tau(x_{m-1}) k \sigma(k) \cdots \sigma^{m-2}(k) t^{m-1}$$
is an automorphism of the multiplicative loop $L_f$ which extends to an automorphism of the ring
$S_f$. For all $k\in K^\times$ satisfying  (\ref{eqn:neccessary}) and $\tau\in {\rm Aut}(K) $, the
 automorphisms $H_{\tau , k}$ form a subgroup of ${\rm Aut}(L_f)$.
\end{corollary}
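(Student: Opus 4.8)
The plan is to obtain both assertions from Theorem~\ref{thm:automorphism_of_Sf_field_case} with only a short computation. First I would note that whenever $\tau\in{\rm Aut}(K)$ and $k\in K^\times$ satisfy \eqref{eqn:neccessary}, the theorem already provides that $H_{\tau,k}$ is an automorphism of the ring $S_f$: part~(i) covers the case $n\geq m-1$ and part~(ii) the case $n<m-1$. A ring automorphism is in particular a multiplicative bijection of $S_f$ that fixes $1$ and sends $0$ to $0$, so its restriction to $L_f=S_f\setminus\{0\}$ is a bijection of $L_f$ preserving the multiplication and the unit, that is, a loop automorphism. By construction it extends to the ring automorphism we began with, which is exactly the first assertion.

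For the subgroup statement, I would set $G=\{H_{\tau,k}\mid (\tau,k)\text{ satisfies }\eqref{eqn:neccessary}\}$ and first record that $G$ is nonempty: the pair $\tau={\rm id}$, $k=1$ satisfies \eqref{eqn:neccessary} trivially and gives $H_{{\rm id},1}={\rm id}_{L_f}$. Next I would establish the composition law. Writing $H_{\tau,k}(\sum_{i=0}^{m-1}x_it^i)=\sum_{i=0}^{m-1}\tau(x_i)\big(\prod_{l=0}^{i-1}\sigma^l(k)\big)t^i$ and using that every $\tau\in{\rm Aut}(K)$ commutes with $\sigma$ (both lie in the cyclic group ${\rm Aut}(K)$) and is multiplicative, one finds $\tau_1\big(\prod_{l=0}^{i-1}\sigma^l(k_2)\big)=\prod_{l=0}^{i-1}\sigma^l(\tau_1(k_2))$, and a direct substitution yields
\[
H_{\tau_1,k_1}\circ H_{\tau_2,k_2}=H_{\tau_1\tau_2,\;\tau_1(k_2)k_1}.
\]

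It then remains to check that the composed pair $(\tau_1\tau_2,\tau_1(k_2)k_1)$ again satisfies \eqref{eqn:neccessary}; this is the only real content of the proof. Assuming $\tau_1(a_i)=\big(\prod_{l=i}^{m-1}\sigma^l(k_1)\big)a_i$ and $\tau_2(a_i)=\big(\prod_{l=i}^{m-1}\sigma^l(k_2)\big)a_i$, I would apply $\tau_1$ to the second relation, push $\tau_1$ through the product $\prod_{l=i}^{m-1}\sigma^l(\cdot)$ using the commutation with $\sigma$, and then combine with the first relation; the two products merge multiplicatively into $\prod_{l=i}^{m-1}\sigma^l(\tau_1(k_2)k_1)$, giving exactly \eqref{eqn:neccessary} for $(\tau_1\tau_2,\tau_1(k_2)k_1)$. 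Hence $G$ is closed under composition. Since $L_f$ is finite, ${\rm Aut}(L_f)$ is finite, so a nonempty subset closed under composition is automatically a subgroup; alternatively one checks directly that $H_{\tau,k}^{-1}=H_{\tau^{-1},\,\tau^{-1}(k)^{-1}}$ and that this pair satisfies \eqref{eqn:neccessary}. The main obstacle is precisely the closure verification, where the commutation of the field automorphisms with $\sigma$ is essential: it is what allows $\tau_1$ to be transported through the twisted products so that the two instances of \eqref{eqn:neccessary} can be multiplied together.
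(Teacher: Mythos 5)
Your proof is correct and follows essentially the same route the paper intends: the corollary is stated as an immediate consequence of Theorem~\ref{thm:automorphism_of_Sf_field_case} (parts (i) and (ii) together covering all $n$), with the loop automorphism obtained by restricting the ring automorphism to $L_f$ and the subgroup property coming from closure of the maps $H_{\tau,k}$ under composition. Your explicit verification of the composition law $H_{\tau_1,k_1}\circ H_{\tau_2,k_2}=H_{\tau_1\tau_2,\,\tau_1(k_2)k_1}$ and of condition~\eqref{eqn:neccessary} for the composed pair (using that ${\rm Aut}(K)$ is abelian for the finite field $K$, so $\tau$ and $\sigma$ commute) is exactly the computation the paper leaves implicit in Theorem~\ref{thm:automorphism_of_Sf_field_case}(ii) and the cited results of [BP].
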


\begin{remark} (i)
 Let  $n \geq m-1$. For all $\tau\in {\rm Aut}_F(K^\times)$ and $k \in K^{\times}$  such that
$
\tau(a_i) = \Big( \prod_{l=i}^{m-1}\sigma^l(k) \Big) a_i
$
for all $i \in\{ 0, \ldots, m-1\}$, a tedious calculation shows  that
\begin{equation} \label{eqn:automain}
H_{\tau , k}(\sum_{i=0}^{m-1} x_i t^i )=  \tau(x_0) + \sum_{i=1}^{m-1} \tau(x_i)
\big(\prod_{l=0}^{i-1}\sigma^l(k)\big) t^i,
\end{equation}
is only  an automorphism of $L_f$, if $\tau$ is additive. So if there are automorphisms of $L_f$
not induced by automorphisms of $S_f$, they will not have the form $H_{\tau , k}$.
\\ (ii)
 Let $G$ be a ring automorphism of  $R=K[t;\sigma]$. Then for $h(t) = \sum_{i=0}^{r} b_i t^i \in R$ we have
$$G(h(t))=
\tau(b_0)  +\sum_{i=i}^{m-1}\tau(b_i) \prod_{l=0}^{i-1}\sigma^l(k) t^i$$
for some $\tau\in{\rm Aut}(K)$
and some $k\in K^\times$
(\cite[Lemma 1]{LS}, or cf. \cite[p.~75]{Kish}).
The automorphisms $H_{\tau,k}$ of ${\rm Aut}(L_f)$ with $\tau\in {\rm Aut}(K)$ a field automorphism are canonically induced by the automorphisms $G$
of $R$ which satisfy  \eqref{eqn:neccessary}:
It is straightforward to see that
$S_f\cong S_{G(f)}$ \cite[Theorem 7]{LS} and consequently, $L_f\cong L_{G(f)}.$
In particular, if $k\in K^\times$ satisfies Equation \eqref{eqn:neccessary} then
$$G(f(t)) = \big( \prod_{l=0}^{m-1} \sigma^l(k) \big) f(t),$$
thus $G$ induces an automorphism of $L_f$.
\end{remark}

From \cite[Theorem 4]{BP} we obtain analogously as in the proofs for  \cite[Theorems 7, 8]{BP} that
 the subgroups   of  ${\rm Aut}(L_f)$
 that are induced by automorphisms of $S_f$ restricted to $L_f$,
 all  are subgroups induced by  the automorphisms of Sandler semifields  for certain $f$:

\begin{proposition} \label{Aut(S_f) subgroup corollary}
 Let
 $g(t) = t^m - \sum_{i=0}^{m-1} b_i t^i \in K[t;\sigma]$
  be irreducible and not right-invariant.  Let
  $$f(t) = t^m - b_0 \in K[t;\sigma] \text{ such that } b_0 \in K \setminus F$$ or more generally,
 $$f(t) = t^m - \sum_{i=0}^{m-1} a_i t^i \in K[t;\sigma],$$
where $a_i \in \left\{ 0 , b_i \right\}$ for all $i \in \{ 0,\ldots , m-1\}$,
 be  irreducible and not right-invariant. Then
$$\{H\in {\rm Aut}(L_g)\,|\, H \text{ extends to some } H_{\tau , k} \in {\rm Aut}(S_g) \}$$
 is a subgroup of
$$\{H\in {\rm Aut}(L_f)\,|\, H \text{ extends to some }H_{\tau , k} \in {\rm Aut}(S_f) \}.$$
\end{proposition}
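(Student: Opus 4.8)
The plan is to reduce the statement to the explicit parametrisation of the relevant automorphisms furnished by Theorem~\ref{thm:automorphism_of_Sf_field_case} and Corollary~\ref{cor:automorphism_of_Lf_field_case}, and then to observe that passing from $g$ to $f$ only trivialises or preserves the defining constraints. The structural remark that makes the statement meaningful is that $S_f$ and $S_g$ share the same underlying additive group $R_m=\{\sum_{i=0}^{m-1}x_it^i : x_i\in K\}$, differing only in their multiplication; hence $L_f$ and $L_g$ have the same underlying set, and both $\mathrm{Aut}(L_f)$ and $\mathrm{Aut}(L_g)$ sit inside $\mathrm{Sym}(R_m\setminus\{0\})$. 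Moreover, the formula \eqref{eqn:automain} defining $H_{\tau,k}$ depends only on the pair $(\tau,k)\in\mathrm{Aut}(K)\times K^\times$ and not at all on the coefficients of the defining polynomial, so a fixed pair $(\tau,k)$ determines one and the same permutation of $R_m\setminus\{0\}$ whether it is read as a candidate automorphism of $L_g$ or of $L_f$.

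First I would record, using Theorem~\ref{thm:automorphism_of_Sf_field_case}, that
$$\{H\in\mathrm{Aut}(L_g) : H\text{ extends to some }H_{\tau,k}\in\mathrm{Aut}(S_g)\}=\{H_{\tau,k}:(\tau,k)\in P_g\},$$
where $P_g\subseteq\mathrm{Aut}(K)\times K^\times$ is the set of pairs satisfying $\tau(b_i)=\bigl(\prod_{l=i}^{m-1}\sigma^l(k)\bigr)b_i$ for all $i\in\{0,\ldots,m-1\}$; the analogous description with the set $P_f$ defined by $\tau(a_i)=\bigl(\prod_{l=i}^{m-1}\sigma^l(k)\bigr)a_i$ holds for $f$. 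By Corollary~\ref{cor:automorphism_of_Lf_field_case} both families of maps are genuine subgroups of $\mathrm{Aut}(L_g)$ and $\mathrm{Aut}(L_f)$ respectively, so closure under composition and inverses is already settled and only the set inclusion remains to be proved.

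The heart of the argument is the inclusion $P_g\subseteq P_f$, which I would verify index by index. Fix $(\tau,k)\in P_g$ and $i\in\{0,\ldots,m-1\}$. If $a_i=b_i$, then the condition $\tau(a_i)=\bigl(\prod_{l=i}^{m-1}\sigma^l(k)\bigr)a_i$ defining membership in $P_f$ is literally the corresponding condition for $g$, which holds since $(\tau,k)\in P_g$. If instead $a_i=0$, the condition reads $0=0$ and is satisfied vacuously. As $a_i\in\{0,b_i\}$ for every $i$ by hypothesis, all the defining equations for $f$ hold, so $(\tau,k)\in P_f$. The Sandler case $f(t)=t^m-b_0$ is the special instance $a_0=b_0$, $a_1=\cdots=a_{m-1}=0$, and is covered automatically.

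Finally I would assemble the pieces: any $H$ in the first subgroup equals $H_{\tau,k}$ for some $(\tau,k)\in P_g\subseteq P_f$, and since this is the same permutation of $R_m\setminus\{0\}$ in either reading, $H$ lies in the second subgroup. Thus the first subgroup is a subset of the second inside $\mathrm{Sym}(R_m\setminus\{0\})$, and because both are already known to be groups under composition of permutations, this inclusion is an inclusion of subgroups. I expect the only genuinely delicate point to be the bookkeeping that identifies the two families as the same permutations despite living a priori in $\mathrm{Aut}(L_g)$ and $\mathrm{Aut}(L_f)$; once the coordinate-free formula \eqref{eqn:automain} is in hand this is immediate, and the remaining verification is the trivial observation about vanishing coefficients made above.
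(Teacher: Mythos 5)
Your proposal is correct and takes essentially the same approach as the paper, which proves the proposition by the analogous arguments of \cite[Theorems 7, 8]{BP}: the parametrisation of the relevant automorphisms by pairs $(\tau,k)$ subject to \eqref{eqn:neccessary}, followed by the coefficientwise observation that for $a_i\in\{0,b_i\}$ each constraint for $f$ is either identical to the one for $g$ or the vacuous identity $0=0$, together with the remark that $H_{\tau,k}$ is the same permutation of $R_m\setminus\{0\}$ in both readings. The one cosmetic point is that the necessity of \eqref{eqn:neccessary} for a map already of the form $H_{\tau,k}$ follows for every $n$ by applying multiplicativity to $t\cdot t^{m-1}$, so your appeal to Theorem \ref{thm:automorphism_of_Sf_field_case} does not really need the hypothesis $n\geq m-1$.
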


\begin{proposition} \label{prop:fixedfield}
Suppose
 $f(t) = t^m - \sum_{i=0}^{m-1} a_it^i \in F[t]\subset K[t;\sigma]$
 is irreducible and not right-invariant.
 \\ (i) (\cite[Theorem 11]{BP})
 $\langle H_{\sigma,1}\rangle \cong \mathbb{Z}/n \mathbb{Z}$ is a cyclic subgroup of ${\rm Aut}(L_f)$.
 \\ (ii) (\cite[Corollary 20]{BP})
 Suppose $a_{m-1}\in F^\times$. Then for all
$\tau\in {\rm Aut}(K) $, the maps  $H_{\tau , 1}$  are ring automorphisms of $S_f$, and thus
 ${\rm Aut}(K)$ is isomorphic to a subgroup of  ${\rm Aut}(L_f)$.
 \\ (iii) (Remark \ref{re:t}) The powers of $t$ form a multiplicative group of order $m$ in the loop $L_f$
 which has order $q^{mn}-1$.
\end{proposition}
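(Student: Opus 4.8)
The plan is to derive all three parts by specialising the description of ${\rm Aut}(L_f)$ obtained in Corollary \ref{cor:automorphism_of_Lf_field_case} and Theorem \ref{thm:automorphism_of_Sf_field_case} to the case $f\in F[t]$, together with Remark \ref{re:t}. The unifying point is that the coefficients $a_i$ now lie in $F={\rm Fix}(\sigma)$, so they are fixed by $\sigma$ and the automorphism condition \eqref{eqn:neccessary} collapses dramatically when $k=1$.

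For (i), I would first exhibit $H_{\sigma,1}$ as a loop automorphism. Taking $\tau=\sigma$ and $k=1$ in Corollary \ref{cor:automorphism_of_Lf_field_case}, condition \eqref{eqn:neccessary} reads $\sigma(a_i)=\big(\prod_{l=i}^{m-1}\sigma^l(1)\big)a_i=a_i$, which holds for every $i$ precisely because $a_i\in F$. Hence $H_{\sigma,1}\in{\rm Aut}(L_f)$ and extends to ${\rm Aut}(S_f)$. Since $k=1$ kills every product $\prod_l\sigma^l(k)$, the map acts by $H_{\sigma,1}(\sum_i x_it^i)=\sum_i\sigma(x_i)t^i$, and more generally $H_{\sigma,1}^{\,j}=H_{\sigma^j,1}$ acts coefficientwise by $\sigma^j$. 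Thus $H_{\sigma,1}^{\,j}={\rm id}$ if and only if $\sigma^j={\rm id}$ on $K$, i.e. if and only if $n\mid j$, so $\langle H_{\sigma,1}\rangle$ is cyclic of order $n$.

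For (ii), the plan is to show that $\tau\mapsto H_{\tau,1}$ is an injective homomorphism ${\rm Aut}(K)\to{\rm Aut}(L_f)$. By Theorem \ref{thm:automorphism_of_Sf_field_case}, $H_{\tau,1}$ is a ring automorphism of $S_f$ as soon as \eqref{eqn:neccessary} holds with $k=1$, i.e. as soon as $\tau(a_i)=a_i$ for all $i$; the composition law $H_{\tau,1}H_{\tau',1}=H_{\tau\tau',1}$ together with $H_{\tau,1}|_{K^\times}=\tau$ then gives injectivity and yields a subgroup isomorphic to ${\rm Aut}(K)$. The substantive step, and the main obstacle, is verifying that the relevant coefficients are fixed by \emph{every} $\tau\in{\rm Aut}(K)$: this is exactly where one exploits $f\in F[t]$ and $a_{m-1}\in F^\times$, the nonvanishing of $a_{m-1}$ making the $i=m-1$ instance $\tau(a_{m-1})=\sigma^{m-1}(k)a_{m-1}$ of \eqref{eqn:neccessary} control $k$ and single out $k=1$, after which one must check the remaining coefficients are compatible with all of ${\rm Aut}(K)$.

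For (iii), since $f\in F[t]$, Remark \ref{re:t}(ii) immediately gives that the powers of $t$ form a multiplicative group inside $L_f$, and Proposition \ref{prop:subfields} gives $ft\in Rf$, so $t\in{\rm Nuc}_r(L_f)\cong (F[t]/(f))^\times\cong\mathbb{F}_{q^m}^\times$; in particular this group is cyclic. Identifying $t$ with a root of $f$ in $\mathbb{F}_{q^m}$, the order of $\langle t\rangle$ equals the multiplicative order of that root, and computing this order, in particular confirming that it is $m$ in the situation at hand, is the delicate point of (iii). Finally $|L_f|=q^{mn}-1$ is the order recorded in Section \ref{sec:prel}. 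The two genuine obstacles are therefore the coefficient verification of \eqref{eqn:neccessary} in (ii) and the order computation in (iii); everything else is bookkeeping on top of the already-established structure of ${\rm Aut}(L_f)$.
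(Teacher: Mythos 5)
Your part (i) is sound and is, in substance, the argument behind the paper's citation of \cite[Theorem 11]{BP}: since $a_i\in F={\rm Fix}(\sigma)$, condition \eqref{eqn:neccessary} holds with $\tau=\sigma$, $k=1$, Corollary \ref{cor:automorphism_of_Lf_field_case} applies (it needs no hypothesis $n\geq m-1$, as you implicitly use), and $H_{\sigma,1}^{\,j}=H_{\sigma^j,1}$ acts coefficientwise, giving order $n$. The paper offers no local proof of this proposition --- all three parts are citations --- so the decisive question is whether the two steps you explicitly defer can be carried out, and in both cases they cannot, as framed.

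In (ii), with $k=1$ condition \eqref{eqn:neccessary} reads $\tau(a_i)=a_i$ for all $i$, and this is not only sufficient but also necessary: $H_{\tau,1}$ fixes $t,\dots,t^{m-1}$, so $H_{\tau,1}(t^{m-1})H_{\tau,1}(t)=t^{m-1}t=\sum_i a_it^i$, while $H_{\tau,1}(t^{m-1}t)=\sum_i\tau(a_i)t^i$. Since ${\rm Aut}(K)={\rm Gal}(K/\mathbb{F}_p)$, every $\tau$ fixes all the $a_i$ if and only if $a_i\in\mathbb{F}_p$; membership in $F=\mathbb{F}_q$ only guarantees this for $\tau\in{\rm Gal}(K/F)=\langle\sigma\rangle$, which merely reproduces (i). Your hope that $a_{m-1}\in F^\times$ ``controls $k$ and singles out $k=1$'' is backwards: the $i=m-1$ instance of \eqref{eqn:neccessary} forces $\sigma^{m-1}(k)=\tau(a_{m-1})/a_{m-1}$, which equals $1$ exactly when $\tau$ already fixes $a_{m-1}$; the nonvanishing of $a_{m-1}$ serves in \cite{BP} to pin down $k$ \emph{given} $\tau$ (whence completeness statements such as ${\rm Aut}_F(S_f)=\langle H_{\sigma,1}\rangle$), and it cannot make $H_{\tau,1}$ multiplicative for a $\tau$ that moves $a_{m-1}$ (take $q=p^2$, $a_{m-1}\in\mathbb{F}_q\setminus\mathbb{F}_p$ and $\tau$ the Frobenius). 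So the verification you postpone is not bookkeeping: the claim is provable by this route only for those $\tau$ fixing every coefficient, i.e.\ for all of ${\rm Aut}(K)$ only when $f\in\mathbb{F}_p[t]$.

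In (iii), your reduction via Proposition \ref{prop:subfields} --- $\langle t\rangle$ sits in ${\rm Nuc}_r(L_f)\cong(F[t]/(f))^\times\cong\mathbb{F}_{q^m}^\times$ and has order equal to the multiplicative order $d$ of a root $\alpha$ of $f$ --- is correct, but the deferred confirmation that $d=m$ is impossible: $d=m$ would make $f$, the degree-$m$ minimal polynomial of $\alpha$ over $F$, a monic divisor of $t^m-1$ of the same degree, i.e.\ $f=t^m-1$, which is reducible for $m\geq 2$. Indeed $m$ is the order of $q$ modulo $d$, so $m\leq\varphi(d)<d$ always; for instance for $f=t^m-a$ the group contains $t^{(m}=a$, and in the paper's own $\mathbb{F}_{11^2}$ example with $f=t^5-\alpha^{12}$ its order is $50$, not $5$. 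What Remark \ref{re:t}(ii) actually delivers is that the powers of $t$ form a cyclic group containing the $m$ distinct elements $1,t,\dots,t^{m-1}$; the printed ``of order $m$'' is precisely the point your outline leaves open, and it cannot be closed as stated.
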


\begin{example}\label{ex:I}
Write $K = \mathbb{F}_{p^l},$ so that  with $K = \mathbb{F}_{q^n}$, $q = p^r$ we have $l = nr$. For $r\not=0$ choose
 $$\sigma(x)=x^{p^r},$$
i.e. ${\rm Fix}(\sigma)=\mathbb{F}_{p^r}=F$ and $\sigma$ has order $n$. Let $K^\times=\langle \alpha\rangle$ and
$m$ be a prime divisor of $p^{gcd(r,l)}-1=p^r-1$. Then there is some $a\in K^\times$ such that
$K[t;\sigma]/K[t;\sigma](t^m-a)$
 is a proper semifield if and only if
$$gcd\big((p^l-1)(p^r-1),(p^{mr}-1)\big)>p^r-1.$$
These $a\in K^\times$ are given by $a=\alpha^u$ for any $u\not\in\mathbb{Z}(p^{mr}-1)(p^r-1)^{-1}$.

For $m=2,3,$ the assumption that $m$ divides $p^r-1$ is not needed.
Note that for
$$(p,m)\in\{(11,5),(29,7),(67,11),\dots\}$$
where
$p=1\text{ mod } m$ and where $l=nr$ is a multiple of $m$, sufficient conditions are easier to find (\cite[(22)]{P66}).
This way we obtain loops of order  $|K|^m-1=p^{mnr}-1$ with left and middle nucleus
$\mathbb{F}_{p^l}\cdot 1$ and center $\mathbb{F}_{p^r}\cdot 1$.
For $n\geq m$ these are the multiplicative loops of Sandler semifields.
 Using the results on the automorphisms of $S_f$ for $f(t)=t^m-a$ in \cite{BP}
(cf. also \cite[Corollary 5.32]{CB}) or simply a straightforward calculation, we can conclude the following: Let $s=(p^{rm}-1)/(p^r-1)$. If $\omega$ is an
$s$th root of unity in $K$, then $H_{id,\omega}\in {\rm Aut}(S_f)$ and the subgroup of automorphisms
 $\langle H_{id,\omega}\rangle$ is isomorphic to the cyclic subgroup in
$K^\times$ of all the $s$th roots of unity in $K$, which has
$$S(r,m,l)=gcd\big( \frac{p^{rm}-1}{p^r-1},p^l-1\big)$$
elements. Therefore there are at least $S(r,m,l)$
distinct automorphisms of $L_f$ of the form $H_{id,k}$ with $k$ an $s$th root of unity,
which form a subgroup of ${\rm Aut}(L_f)$. In particular we have:
\\ (i) If $p\equiv 1 \text{ mod }m$ then $S(r,m,l)\geq m$ and so there are at least $m$ automorphisms of $L_f$ of the form $H_{id,k}$,
which form a subgroup of ${\rm Aut}(L_f)$.
\\ (ii) Suppose $l$ is even and at least one of $r,m$ is also even. Then if
 $p\equiv -1 \text{ mod }m$, there are at least $m$ automorphisms of $L_f$ of the form $H_{id,k}$,
 which form a subgroup of ${\rm Aut}(L_f)$.
 \\ (iii) Suppose $p$ is odd and $m=2$. Then ${\rm Aut}(L_f)$ is not trivial by (i).
\end{example}

\begin{example}
 Suppose $K = \mathbb{F}_{11^2}$ and $\sigma(x)=x^{11}.$ Then $\sigma$ has order $n=2$.
 Write $K^\times=\langle \alpha\rangle$ and choose $a=\alpha^{12}$, then for $f(t)=t^5-a$,
 $$S_f=K[t;\sigma]/K[t;\sigma](t^5-a)$$
 is a proper semifield over
 $F=\mathbb{F}_{11}$, where the powers of $t$
 form a multiplicative group  \cite[(22)]{P66}.
Its multiplicative loop  has order $11^{10}-1$, left and middle nucleus $(\mathbb{F}_{11^2})^\times\cdot 1$ and
right nucleus isomorphic to
$\big(\mathbb{F}_{11}[t]/(t^5-a)\big)^\times\cong (\mathbb{F}_{11^5})^\times.$
Thus its nucleus is $(\mathbb{F}_{11})^\times \cdot 1$ and equals its center.
It has a subgroup  of order $5$ generated by $t$ and  is clearly not weak Lagrangian.
Its automorphism group
 contains $\langle H_{\sigma,1}\rangle\cong\mathbb{Z}/2\mathbb{Z}$ as a subgroup  (Proposition \ref{prop:fixedfield}).
It also contains at least 5 automorphisms of the form $H_{id,k}$ by Example \ref{ex:I} (iii).
Since the nucleus of $L_f$ equals its center, there are no non-trivial inner mappings of $L_f$ induced by inner automorphisms of $S_f$
and
$$11^{45}(11^{9}-1)\cdots(11^2-1) \leq| {\rm Inn}(L_f)|\leq
11^{45}(11^{9}-1)\cdots (11^2-1)  10;$$
 see the next Section.
\end{example}

\begin{example}
Let $f(t)=t^2-a_1t-a_0\in K[t;\sigma]$ be irreducible  and not right-invariant. Then $S_f$ is a \emph{Hughes-Kleinfeld semifield}.
 $L_f$ is a loop of order $q^{2n}-1$ with left and middle nucleus $K^\times\cdot 1=\mathbb{F}_{q^n}^\times\cdot 1$ and
 right nucleus
$\mathbb{F}_{q^2}^\times \cdot 1$.
For $a_0,a_1\in F^\times$,  ${\rm Aut}(S_f)\cong \mathbb{Z}/n\mathbb{Z}$, so ${\rm Aut}(L_f)$ contains a cyclic subgroup of order $n$.
(If $a_0\in K\setminus F$ and $a_1\in F^\times$ then ${\rm Aut}(S_f)$ is trivial.)
We know that $f(t)=t^2-a_1t-a_0\in K[t;\sigma]$ is irreducible if and only if $z\sigma(z)+a_1z-a_0=0$ has no solutions
in $K$ (e.g. see \cite[(17)]{P66}). If $\sigma$ has order 2 and $a_1\not=2$, then $f(t)=t^2-a_1t-a_0\in F[t]$  is irreducible
in $ K[t;\sigma]$ if and only if it is irreducible in $F[t]$ \cite[(23)]{P66}.
\end{example}

\begin{example}
Let $F=\mathbb{F}_{5}$ and $K=\mathbb{F}_{5^3}$, with $\sigma$ generating the
Galois group of $K/F$. Let
$f(t)=t^2-2\in K[t;\sigma]$. The multiplicative loop $L_f$ of the Hughes-Kleinfeld semifield $S_f$
has order $5^{6}-1=15\,624$ with left and middle nucleus $\mathbb{F}_{5^3}^\times\cdot 1$ and right nucleus
$\mathbb{F}_{25}^\times\cdot 1$. By \cite[Example 16]{W09}, ${\rm Aut}(L_f)$  contains a cyclic subgroup of order $6$.
\end{example}

%%%%%%%%%%%%%%%%%%%%%%%%%%%%%%%%%%%%%%%%%%%%%%%%%%%%%%%%%%%%%%%%%%%%%%%%%%%%%%%%%%%%%%%%%%%%%%%%%%%%%%%%%%%%%
%
%Inner automorphisms
%
%%%%%%%%%%%%%%%%%%%%%%%%%%%%%%%%%%%%%%%%%%%%%%%%%%%%%%%%%%%%%%%%%%%%%%%%%%%%%%%%%%%%%%%%%%%%%%%%%%%%%%%%%%%%%

\section{Inner automorphisms} \label{sec:inner}

Let $f\in K[t; \sigma]$ have degree $m$, and be monic, irreducible and not right-invariant.

Because of
$|{\rm Inn}(L_f)|=|{\rm Mlt}(L_f)|/(q^{mn}-1),$ the size of the inner mapping group is bounded by
$$q^{mn(mn-1)/2}(q^{mn-1}-1)\cdots(q^2-1) \leq| {\rm Inn}(L_f)|\leq
q^{mn(r-1)/2}(q^{mn-1}-1)\cdots(q-1);$$
cf. Section \ref{subsec:structure}.
In particular, for $q=2$, we get
$|{\rm Inn}(L_f)|=2^{mn(mn-1)/2}(2^{mn-1}-1)\cdots(2^2-1).$
If the semifield $A=K[t;\sigma]/K[t;\sigma]f$ has a nucleus
which is larger than its center $F\cdot 1$, then the inner automorphisms
$\{G_c\,|\, 0\not=c\in {\rm Nuc}(A) \}$
with $G_c(x) = c x c^{-1}$  form a non-trivial subgroup of ${\rm Aut}_F(A)$ \cite[Lemma 2, Theorem 3]{W09} and each such inner automorphism
$G_c$ extends $id_{{\rm Nuc}(A)}$. In particular, the elements in $\{G_c\,|\, 0\not=c\in {\rm Nuc}(A) \}$
 are  middle inner mappings $T_c$ and $\{G_c\,|\, 0\not=c\in {\rm Nuc}(A) \}$
is a non-trivial subgroup of loop isomorphisms in ${\rm Inn}(L_f)$.

 \cite[Proposition 8]{BPS} yields immediately:

\begin{proposition} \label{prop:innerII}
Suppose $N={\rm Nuc}(L_f)\cong\mathbb{F}_{q^l}^\times$ for some integer $1<l\leq n$. Then  $L_f$ has
$$(q^l-1)/(q-1)$$
inner automorphisms which extend to an inner automorphism of $S_f$ and  thus all are middle inner mappings. They are
determined by the $q^l$
elements in its nucleus that do not lie in $F$ and are all extensions of $id_N$.
\\ In particular, if $L_f$ has nucleus $K^\times\cdot 1$ then there are exactly
$$s=(q^n-1)/(q-1)$$
 inner automorphisms of $L_f$ which extend to an inner automorphism of $S_f$ and  thus  are middle inner mappings.
 All
extend $id_{K^\times}$ and have the form $H_{id,k}$ for a suitable $k\in K^\times$.
\end{proposition}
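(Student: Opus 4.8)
The plan is to obtain these maps as the loop restrictions of the ring inner automorphisms discussed just above, and then to count how many genuinely distinct maps occur. By the paragraph preceding the statement (and \cite[Lemma 2, Theorem 3]{W09}, \cite[Proposition 8]{BPS}), every invertible $c\in N$ gives a ring automorphism $G_c(x)=cxc^{-1}$ of $S_f$ that fixes $N$ pointwise, and on $L_f$ each $G_c$ is a middle inner mapping lying in ${\rm Inn}(L_f)$ and extending $id_N$. So the only thing that needs proof is the exact count $(q^l-1)/(q-1)$, together with the precise shape of these maps in the case $N=K^\times\cdot 1$.

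First I would determine the kernel of the homomorphism $N\to {\rm Aut}(L_f)$, $c\mapsto G_c$. If $G_c=G_{c'}$ then $cxc^{-1}=c'xc'^{-1}$ for all $x$; writing $d=c^{-1}c'\in N$ this reads $x=dxd^{-1}$, i.e. $d$ commutes with every element of $S_f$ and hence lies in the center $F^\times\cdot 1$. Conversely any central factor is absorbed, so the kernel is exactly $F^\times\cdot 1\subseteq N$. Therefore the distinct maps $G_c$ are indexed by the cosets of $F^\times$ in $N\cong\mathbb{F}_{q^l}^\times$, and their number is
$$\frac{|N|}{|F^\times|}=\frac{q^l-1}{q-1},$$
as claimed; all of them extend $id_N$.

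For the special case $N=K^\times\cdot 1$ (so $l=n$) I would pin down the shape of the maps. Since $K$ is commutative, $G_c$ fixes $K^\times$ pointwise, so in the notation of Corollary \ref{cor:automorphism_of_Lf_field_case} we have $\tau=id$. Using $tc^{-1}=\sigma(c)^{-1}t$ one computes $G_c(t)=ctc^{-1}=\big(c\,\sigma(c)^{-1}\big)t$, whence $G_c=H_{id,k}$ with $k=c\,\sigma(c)^{-1}$. By Hilbert 90 the assignment $c\mapsto c\,\sigma(c)^{-1}$ maps $K^\times$ onto $\ker(N_{K/F})$ with kernel $F^\times$, so $k$ ranges over the cyclic group $\ker(N_{K/F})$ of order $s=(q^n-1)/(q-1)$. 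This both recovers the count $s$ and shows that every inner automorphism of $L_f$ arising this way has the form $H_{id,k}$ and extends $id_{K^\times}$.

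The step I expect to be the main obstacle is the kernel computation, namely verifying that $G_c=G_{c'}$ forces $c^{-1}c'$ into the center and nothing smaller; once this index is identified the value $(q^l-1)/(q-1)$, and its specialization to $s$ via Hilbert 90, drops out. The remainder is bookkeeping, matching the ring-level inner automorphisms of \cite[Proposition 8]{BPS} with the loop-theoretic notions of middle inner mapping and of the maps $H_{\tau,k}$.
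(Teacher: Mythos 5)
Your argument is correct, but it is worth noting how it relates to the paper's treatment: the paper offers no proof at all here, deriving the proposition ``immediately'' from \cite[Proposition 8]{BPS}, so what you have written is essentially a self-contained reconstruction of that cited result. Your two ingredients are sound: the map $c\mapsto G_c$ on invertible nucleus elements is a homomorphism into ${\rm Aut}(L_f)$ (products with nucleus elements associate, and \cite[Theorem 3]{W09} guarantees the $G_c$ form a group), and your kernel computation is right --- $G_c=G_{c'}$ forces $d=c^{-1}c'$ to commute with all of $S_f$, and since $d$ already lies in the nucleus this puts $d$ in ${\rm C}(S_f)=F\cdot 1$, giving exactly $(q^l-1)/(q-1)$ distinct maps. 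Your special-case computation also checks out: $k=c\,\sigma(c)^{-1}$ telescopes to $G_c(x_it^i)=x_i\bigl(c/\sigma^i(c)\bigr)t^i=x_i\bigl(\prod_{l=0}^{i-1}\sigma^l(k)\bigr)t^i$, matching $H_{id,k}$, and the finite-field Hilbert 90 (surjectivity of $c\mapsto c\sigma(c)^{-1}$ onto $\ker(N_{K/F})$, by counting, with kernel $F^\times$) recovers $s=(q^n-1)/(q-1)$. The one point you pass over slightly quickly is the word ``exactly'': your count shows the family $\{G_c \mid c\in N \text{ invertible}\}$ has precisely $(q^l-1)/(q-1)$ members, but to conclude that these exhaust the loop automorphisms extending to \emph{inner} automorphisms of $S_f$ one needs that every inner automorphism $G_m$ of $S_f$ (which the paper defines for arbitrary invertible $m$, not only nucleus elements) already has $m$ in the nucleus up to a central factor --- this is part of what \cite{W09} and \cite[Proposition 8]{BPS} supply, and your opening citations do implicitly cover it, but it deserves to be said explicitly since it is exactly where the upper bound comes from. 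With that caveat made explicit, your proof is a legitimate and more transparent alternative to the paper's bare citation.
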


  \cite[Proposition 9]{BPS} yields:

\begin{proposition}\label{prop:estimate}
Let  $n\geq m-1$.
Then there exist at most
$$|{\rm ker}(N_{K/F})|=(q^n-1)/(q-1)$$
 distinct inner automorphisms of $L_f$ that extend to
$H_{id,k}$ such that $N_{K/F}(k) = 1$. These are  middle inner mappings   on $L_f$.
\end{proposition}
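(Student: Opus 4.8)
The plan is to identify explicitly which of the loop automorphisms $H_{id,k}$ are middle inner mappings, and then to count them. First I would compute the middle inner mapping $T_c=L_c^{-1}R_c$ attached to an element $c$ of the left nucleus ${\rm Nuc}_l(L_f)=K^\times$. Since $t^ic=\sigma^i(c)t^i$ in $S_f$, one has $R_c(\sum_i x_it^i)=\sum_i x_i\sigma^i(c)t^i$ and $L_c^{-1}(\sum_i y_it^i)=\sum_i c^{-1}y_it^i$, with all degrees staying below $m$ so that no reduction modulo $f$ is needed; composing gives
$$T_c\Big(\sum_{i=0}^{m-1}x_it^i\Big)=\sum_{i=0}^{m-1}c^{-1}\sigma^i(c)\,x_it^i.$$
Comparing this with the formula (\ref{eqn:automain}) for $H_{id,k}$ (the case $\tau=id$), the two maps agree exactly when $\prod_{l=0}^{i-1}\sigma^l(k)=c^{-1}\sigma^i(c)$ for every $i$. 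The case $i=1$ forces $k=c^{-1}\sigma(c)=\sigma(c)c^{-1}$, and with this choice the left-hand product telescopes to $c^{-1}\sigma^i(c)$, so the identity holds for all $i$. Hence $H_{id,\sigma(c)c^{-1}}=T_c$ on $L_f$, and every such automorphism is by construction a middle inner mapping.

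Next I would match the norm condition with the image of the map $c\mapsto\sigma(c)c^{-1}$. Telescoping the norm gives $N_{K/F}(\sigma(c)c^{-1})=1$ for every $c$, and conversely Hilbert's Theorem 90 for the cyclic extension $K/F$ with ${\rm Gal}(K/F)=\langle\sigma\rangle$ shows that every $k\in K^\times$ with $N_{K/F}(k)=1$ has the form $\sigma(c)c^{-1}$ for some $c\in K^\times$. Therefore an automorphism $H_{id,k}\in{\rm Aut}(S_f)$ satisfies $N_{K/F}(k)=1$ if and only if it coincides with some $T_c$, which proves that all the automorphisms in the statement are middle inner mappings. For the bound, the assignment $k\mapsto H_{id,k}$ is injective, since $H_{id,k}(t)=kt$ recovers $k$; hence the number of such automorphisms is at most $|\{k\in K^\times : N_{K/F}(k)=1\}|=|{\rm ker}(N_{K/F})|=(q^n-1)/(q-1)$, the last equality using the surjectivity of $N_{K/F}$ onto $F^\times$.

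The individual computations are routine and the genuine input is the classical Hilbert 90; the one point that needs care is that the estimate is only an upper bound. A given $k\in{\rm ker}(N_{K/F})$ produces a genuine ring automorphism $H_{id,k}$ only when it additionally satisfies the compatibility relations $a_i=(\prod_{l=i}^{m-1}\sigma^l(k))a_i$ of Theorem \ref{thm:automorphism_of_Sf_field_case}, so the admissible $k$ form a possibly proper subset of ${\rm ker}(N_{K/F})$, and one cannot in general replace ``at most'' by ``exactly''.
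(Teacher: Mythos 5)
Your proof is correct and is essentially the intended argument: the paper gives no proof of its own, importing the statement from \cite[Proposition 9]{BPS}, and the underlying computation there is the same one you perform --- identify $T_c=L_c^{-1}R_c$ for $c\in K^\times$ with $H_{id,\sigma(c)c^{-1}}$ by telescoping, use Hilbert 90 for the cyclic extension $K/F$ to see that the norm-one $k$ are exactly those of the form $\sigma(c)c^{-1}$, and bound the count by the injectivity of $k\mapsto H_{id,k}$ together with $|{\rm ker}(N_{K/F})|=(q^n-1)/(q-1)$. Your closing observation is also the right one: the bound is only ``at most'' because a given $k\in {\rm ker}(N_{K/F})$ yields an automorphism only if it also satisfies the compatibility condition \eqref{eqn:neccessary}, and indeed your argument never needs the hypothesis $n\geq m-1$, which enters elsewhere only to guarantee that \emph{every} automorphism of $S_f$ has the form $H_{\tau,k}$.
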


Proposition \ref{prop:estimate} and Proposition
\ref{prop:innerII} imply the following estimates for the number of middle inner mappings $T_c$
that are inner automorphisms of $L_f$:

\begin{theorem}\label{cor:innerII}
Let   $n \geq m-1$.
 If $L_f$ has nucleus  $K^\times\cdot 1$ then it has at least $s=(q^n-1)/(q-1)$ inner automorphisms extending $id_{K^\times}$.
 These form a cyclic subgroup of ${\rm Aut}(L_f)$ isomorphic to ${\rm ker}(N_{K/F})$,
 and  a cyclic subgroup of middle inner mappings in ${\rm Inn}(L_f)$ isomorphic to ${\rm ker}(N_{K/F})$.
\end{theorem}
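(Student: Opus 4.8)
The plan is to combine Propositions \ref{prop:innerII} and \ref{prop:estimate} with an explicit description of the inner automorphisms coming from the nucleus, and then to identify the resulting group structure. First I would invoke Proposition \ref{prop:innerII}: since $L_f$ has nucleus $K^\times\cdot 1$, there are exactly $s=(q^n-1)/(q-1)$ inner automorphisms of $L_f$ that extend to inner automorphisms of $S_f$, each of the form $H_{id,k}$ and each extending $id_{K^\times}$. This immediately yields the bound ``at least $s$'' in the statement, and, by the discussion preceding this section, each such map is a middle inner mapping $T_c$ lying in ${\rm Inn}(L_f)$.

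Next I would realize these maps concretely. Since ${\rm Nuc}(S_f)=K\cdot 1$, the invertible nucleus elements are the $c\in K^\times$, with associated inner automorphism $G_c(x)=cxc^{-1}$. Because $K$ is commutative and $c\in K$, one has $G_c|_K=id$, while $G_c(t)=ctc^{-1}=(c/\sigma(c))\,t$, so that $G_c=H_{id,\,c/\sigma(c)}$ on $L_f$ with $c/\sigma(c)\in\ker(N_{K/F})$ (its norm is $N_{K/F}(c)/N_{K/F}(\sigma(c))=1$). This matches the form asserted in Proposition \ref{prop:innerII}, and Proposition \ref{prop:estimate} confirms that $s$ is simultaneously an upper bound for the number of such norm-one maps, so these account for all of them.

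To obtain the cyclic structure I would study the map $\Phi:K^\times\to{\rm Aut}(S_f)$, $c\mapsto G_c$. A direct check gives $G_c\circ G_{c'}=G_{cc'}$, so $\Phi$ is a group homomorphism; its kernel is $\{c\in K^\times : c \text{ is central}\}=F^\times$, since the center of $S_f$ is $F\cdot 1$. Restriction to $L_f$ is injective on automorphisms, so the image in ${\rm Aut}(L_f)$ is isomorphic to $K^\times/F^\times$, a quotient of the cyclic group $K^\times$ and hence cyclic of order $(q^n-1)/(q-1)=s$. Finally, Hilbert's Theorem 90 identifies $K^\times/F^\times$ with $\ker(N_{K/F})$ via $cF^\times\mapsto c/\sigma(c)$ (well defined, with kernel $F^\times$, and surjective onto the norm-one elements), so the subgroup is isomorphic to $\ker(N_{K/F})$. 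Since every $G_c$ restricts to a middle inner mapping $T_c$, this same cyclic group sits inside ${\rm Inn}(L_f)$ as middle inner mappings, completing the argument.

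I expect the main obstacle to be the bookkeeping in the second step: pinning down $G_c=H_{id,\,c/\sigma(c)}$ and verifying that distinct cosets $cF^\times$ yield genuinely distinct loop automorphisms, i.e.\ that $\Phi$ has kernel exactly $F^\times$ even after restriction to $L_f$. Everything else is either quoted from Propositions \ref{prop:innerII} and \ref{prop:estimate} or is the standard Hilbert~90 isomorphism; the only care needed is to keep the convention for $G_c$ and the product $\prod_l\sigma^l(k)$ consistent with equation \eqref{eqn:neccessary}.
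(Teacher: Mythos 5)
Your proposal is correct and matches the paper's approach: the paper obtains this theorem precisely by combining Propositions \ref{prop:innerII} and \ref{prop:estimate}, with the cyclic structure coming from the subgroup $\{G_c \mid 0 \neq c \in {\rm Nuc}(S_f)\}$ discussed at the start of Section \ref{sec:inner} and the cyclicity of $\ker(N_{K/F})$ noted in Section \ref{sec:prel}. Your extra details --- the identification $G_c = H_{id,\,c/\sigma(c)}$ via the telescoping product $k\sigma(k)\cdots\sigma^{i-1}(k) = c/\sigma^i(c)$ for $k = c/\sigma(c)$, the homomorphism $c \mapsto G_c$ with kernel exactly $F^\times$, and the Hilbert~90 isomorphism $K^\times/F^\times \cong \ker(N_{K/F})$ --- correctly and consistently fill in the bookkeeping that the paper delegates to \cite{BPS} and \cite{W09}.
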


Hence  if  $N={\rm Nuc}(L_f)=\mathbb{F}_{q^l}^\times\cdot 1$, is strictly contained in $K^\times$, $l>1$,  then
  $L_f$ has at least $t$ inner automorphisms extending $id_N$, with
 $$\frac{q^l-1}{q-1}\leq t\leq \frac{q^n-1}{q-1}.$$
 These are  middle inner mappings on $L_f$.

\begin{corollary}\label{cor:inner}
Let   $n \geq m-1$ and assume that $L_f$ has nucleus  $K^\times\cdot 1$. Then
$$|\{T_c\in{\rm Inn}(L_f)\,|\, T_c \text{ automorphism}\}|\geq \frac{q^n-1}{q-1}.$$
\end{corollary}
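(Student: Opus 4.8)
The plan is to read this off directly from Theorem~\ref{cor:innerII}, which already produces the required maps and identifies them as middle inner mappings. First I would invoke that theorem: under the standing hypotheses $n\geq m-1$ and ${\rm Nuc}(L_f)=K^\times\cdot 1$, it yields at least $s=(q^n-1)/(q-1)$ inner automorphisms of $L_f$ extending $id_{K^\times}$, and it asserts that these form a cyclic subgroup of ${\rm Aut}(L_f)$ and simultaneously a cyclic subgroup of middle inner mappings in ${\rm Inn}(L_f)$, both isomorphic to ${\rm ker}(N_{K/F})$.

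The essential point I would then make explicit is that each of these maps is at once an automorphism of $L_f$ and a middle inner mapping of the form $T_c$. This is precisely what the two halves of the conclusion of Theorem~\ref{cor:innerII} record, and it rests on the observation (from the discussion preceding Proposition~\ref{prop:innerII}) that an inner automorphism $G_c$ with $c$ an invertible element of the nucleus is exactly the middle inner mapping $T_c$. Consequently every such map lies in the set $\{T_c\in{\rm Inn}(L_f)\,|\, T_c \text{ automorphism}\}$.

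To conclude I would note that these maps are pairwise distinct: they constitute a subgroup isomorphic to the cyclic group ${\rm ker}(N_{K/F})$, which has order $s=(q^n-1)/(q-1)$, so there are exactly $s$ of them. Hence $\{T_c\in{\rm Inn}(L_f)\,|\, T_c \text{ automorphism}\}$ contains at least $s$ elements, which gives the stated bound. This is a pure bookkeeping deduction rather than a computation; the only thing demanding attention---and the closest thing to an obstacle---is confirming that the inner automorphisms counted in Theorem~\ref{cor:innerII} are literally the middle inner mappings $T_c$ of the corollary's set, so that the lower bound on inner automorphisms transfers faithfully to a lower bound on the number of such $T_c$.
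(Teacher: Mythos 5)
Your proposal is correct and matches the paper exactly: the paper gives no separate argument for this corollary, treating it as an immediate consequence of Theorem~\ref{cor:innerII} together with the identification (made in the discussion preceding Proposition~\ref{prop:innerII}) of the inner automorphisms $G_c$ with the middle inner mappings $T_c$. Your explicit check that the subgroup isomorphic to ${\rm ker}(N_{K/F})$ has exactly $s=(q^n-1)/(q-1)$ elements, so the maps are pairwise distinct, is the same bookkeeping the paper leaves implicit.
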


\begin{example} As in Example \ref{ex:I},
write $K = \mathbb{F}_{p^l},$ so that $q = p^r$ with $l = nr$, and for $r\not=0$ choose
 $\sigma(x)=x^{p^r},$
i.e. ${\rm Fix}(\sigma)=\mathbb{F}_{p^r}=F$ and $\sigma$ has order $n$.
Let $f(t)=t^m-a$. By a straightforward calculation (cf. also \cite[Section 5]{CB}) we obtain immediately:
\\ (i) If $c\in K^\times$ is a primitive $(p^{rm}-1)$th  root of unity, then $G_c$ is an inner automorphism of $L_f$.
\\ (ii) If $p^{gcd(rm,l)}>p^{gcd(r,l)}$, then there exists a non-trivial inner automorphism $G_c$ in $L_f$ for some
$(p^{rm}-1)$th root of unity $c\in K^\times$, which is not a $(p^{r}-1)$th root of unity.
\\ (iii)
Suppose $rm|l$. Then $K$ contains a primitive $(p^{rm}-1)$th root of unity $c$ and ${\rm Aut}(L_f)$ has a cyclic subgroup
$\langle G_c\rangle $ of inner automorphisms
of order $s=(p^{rm}-1)/(p^r-1)$.
\end{example}

%%%%%%%%%%%%%%%%%%%%%%%%%%%%%%%%%%%%%%%%%%%%%%%%%%%%%%%%%%%%%%%%%%%%%%%%%%%%%%%%%%%%%%%%%%%%%%%%%%%%%%%%%%%%%
%
% Nonassociative  cyclic algebras
%
%%%%%%%%%%%%%%%%%%%%%%%%%%%%%%%%%%%%%%%%%%%%%%%%%%%%%%%%%%%%%%%%%%%%%%%%%%%%%%%%%%%%%%%%%%%%%%%%%%%%%%%%%%%%%

\section{Nonassociative  cyclic algebras} \label{sec:nonasscyclic}

\subsection{} An algebra $S_f$ with $f(t) = t^m - a\in K[t; \sigma]$ irreducible,  $a \in K \setminus F$ and $n\geq m$
 is called a \emph{Sandler semifield} \cite{San62}.
 Since $(t^{m-1}t)t=at$ and $t(t^{m-1}t)=ta=\sigma(a)t$, a Sandler semifield is not $(m+1)$th power-associative.
 For $m=n$, these algebras are also called
\emph{nonassociative cyclic (division) algebras of degree $m$}  and denoted by $(K/F, \sigma, a)$, as they can be seen as canonical
generalizations of associative cyclic algebras.
 We have
$$(K/F,\sigma,a)\cong (K/F,\sigma,b)$$
 if and only if $\sigma^i(a) = kb $ for some $ 0\leq i \leq m-1$ and some $ k \in F^{\times}$ \cite[Corollary 34]{BP}.
 If the elements $1,a,a^2, \ldots,$ $ a^{m-1}$ are linearly
independent over $F$ then  $(K/F, \sigma, a)$ is a  semifield.
In particular, if $m$ is  prime then every  $a \in K \setminus F$ yields a semifield  $(K/F, \sigma, a)$.

In this section  let $a\in K\setminus F$ and
$f(t)=t^m-a\in K[t;\sigma]$
be irreducible (i.e. $a$ does not lie in any proper subfield of $K/F$), $\sigma$ have order $m$ and
 $$A=(K/F,\sigma,a)=K[t;\sigma]/K[t;\sigma](t^m-a).$$
Then
${\rm Nuc}_l(A^\times)={\rm Nuc}_m(A^\times)={\rm Nuc}_r(A^\times)=K^\times \cdot 1.$
 $A^\times$ has exactly $s=(q^m-1)/(q-1)$ inner
automorphisms, all of them extending $id_K$. These are given by the
 $F$-automorphisms $H_{id,l}$   for all $l\in K$ such that $N_{K/F}(l)=1$.
  The subgroup they generate is cyclic and isomorphic to ${\rm ker}(N_{K/F})$. Moreover,
${\rm SL}(m^2,q)\leq {\rm Mlt}(A^\times)\leq {\rm GL} (m^2,q)$
implies
$$q^{m^2(m^2-1)/2}(q^{m^2}-1)(q^{m^2-1}-1)\cdots(q^2-1)$$ $$ \leq| {\rm Mlt}(A^\times)|\leq (q^{m^2}-1)
q^{m^2(m^2-1)/2}(q^{m^2-1}-1)\cdots(q-1),$$
and
$$q^{m^2(m^2-1)/2}(q^{m^2-1}-1)\cdots(q^2-1) \leq| {\rm Inn}(A^\times)|\leq
q^{m^2(m^2-1)/2}(q^{m^2-1}-1)\cdots(q-1).$$
In particular, for $q=2$,
$${\rm Mlt}(A^\times)=2^{m^2(m^2-1)/2}(2^{m^2}-1)(2^{m^2-1}-1)\cdots(2^2-1),$$
$$ {\rm Inn}(A^\times)=2^{m^2(m^2-1)/2}(2^{m^2-1}-1)\cdots(2^2-1).$$
 By Corollary \ref{cor:inner} we know that
 $ |\{T_c\in{\rm Inn}(A^\times)\,|\, T_c \text{ automorphism}\}|\geq s.$
In other words:

\begin{proposition} \label{prop:t^m-a_automorphism_field finite}  \cite[Proposition 16]{BPS}
Let $\alpha$ be a primitive element of $K$, i.e. $K^{\times} =\langle\alpha\rangle$.  Then
$\langle G_{\alpha}\rangle\subset {\rm Aut}(A^\times)$
 is a cyclic subgroup of inner automorphisms
of order $s =(q^m-1)/(q-1)$.
\end{proposition}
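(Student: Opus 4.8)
The plan is to realize $\langle G_\alpha\rangle$ as the image of all of $K^\times=\langle\alpha\rangle$ under the conjugation homomorphism, and then to read off its order from $K^\times/F^\times$. Throughout I use that in this setting $\text{Nuc}(A^\times)=K^\times\cdot 1$, so conjugation by any element of $K^\times$ is meaningful.

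First I would record that $G_\alpha$ really is an inner automorphism: since $\alpha\in\text{Nuc}(A)$ is invertible, $G_\alpha(x)=\alpha x\alpha^{-1}$ is well defined and, by \cite[Lemma 2, Theorem 3]{W09}, restricts to an automorphism of the loop $A^\times$; the same holds for every $G_c$ with $c\in K^\times$. A one-line computation using $t^ic^{-1}=\sigma^i(c)^{-1}t^i$ shows that $G_c$ fixes $K$ pointwise and sends $zt^i$ to $\big(c/\sigma^i(c)\big)zt^i$; with $k=\alpha/\sigma(\alpha)$ the telescoping identity $\prod_{l=0}^{i-1}\sigma^l(k)=\alpha/\sigma^i(\alpha)$ identifies $G_\alpha$ with the automorphism $H_{id,k}$ of Corollary \ref{cor:automorphism_of_Lf_field_case}, and $N_{K/F}(k)=1$ because the norm is $\sigma$-invariant. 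This step merely confirms consistency with the earlier description of the inner automorphisms and is not strictly needed for the order count.

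The heart of the argument is the map $\Phi\colon K^\times\to {\rm Aut}(A^\times)$, $c\mapsto G_c$. For $c,d\in K^\times\subseteq\text{Nuc}(A)$, associativity with nucleus elements together with commutativity of $K$ gives $G_c\circ G_d=G_{cd}$, so $\Phi$ is a group homomorphism (this is exactly the subgroup statement of \cite[Lemma 2, Theorem 3]{W09}). Its kernel is $\{c\in K^\times : cx=xc\text{ for all }x\in A\}={\rm C}(A)\cap K^\times=F^\times$. Hence $\Phi$ induces an injection $K^\times/F^\times\hookrightarrow {\rm Aut}(A^\times)$ whose image is precisely the group of inner automorphisms coming from the nucleus.

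Finally, since $\alpha$ generates $K^\times$, its class generates $K^\times/F^\times$, so $G_\alpha$ generates ${\rm im}(\Phi)$ and $\langle G_\alpha\rangle={\rm im}(\Phi)$; as $|K^\times/F^\times|=(q^m-1)/(q-1)=s$, this subgroup is cyclic of order $s$. The one point that needs a little care, and which I expect to be the main (if elementary) obstacle, is that $\bar\alpha$ has order exactly $s$ rather than a proper divisor: its order is the least $j>0$ with $\alpha^j\in F^\times$, and since $F^\times$ is the unique subgroup of $K^\times$ of order $q-1$ this occurs exactly when $s\mid j$ (equivalently, via Hilbert 90, $k=\alpha/\sigma(\alpha)$ generates $\ker N_{K/F}$). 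Getting this divisibility right is what pins the order down to $s$.
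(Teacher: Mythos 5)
Your proposal is correct and takes essentially the same route as the paper and the cited result \cite[Proposition 16]{BPS}: conjugation by invertible nucleus elements $c\in K^\times$ yields (via \cite[Lemma 2, Theorem 3]{W09}) a homomorphism into ${\rm Aut}(A^\times)$ whose kernel is the center $F^\times$, so the inner automorphisms coming from the nucleus form a cyclic group of order $s=(q^m-1)/(q-1)$ generated by $G_\alpha$, exactly as in Propositions \ref{prop:innerII} and \ref{prop:estimate} and Theorem \ref{cor:innerII}. Your closing Hilbert~90 observation that $k=\alpha/\sigma(\alpha)$ generates $\ker(N_{K/F})$ is precisely the paper's identification of this subgroup with $\ker(N_{K/F})$ through the automorphisms $H_{id,k}$.
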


\begin{corollary}
For every prime number $m$ there is a loop $L$ of order $q^{m^2}-1$ with center $\mathbb{F}_q^\times\cdot 1$,
 left, middle and right nucleus
$\mathbb{F}_{q^m}^\times\cdot 1$ and a non-trivial automorphism group with a cyclic subgroup of inner automorphisms of order $s$.
\end{corollary}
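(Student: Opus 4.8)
The plan is to realize $L$ as the multiplicative loop $A^\times$ of a nonassociative cyclic division algebra of exactly the type studied in this section, and then to read off each required property from the results already collected above. First I would fix the prime $m$, choose an arbitrary prime power $q$, and set $F = \mathbb{F}_q$, $K = \mathbb{F}_{q^m}$, with $\sigma$ a generator of $\mathrm{Gal}(K/F)$; then $\sigma$ has order $n = m$ and $\mathrm{Fix}(\sigma) = F$, putting us squarely in the setting $A = (K/F,\sigma,a) = K[t;\sigma]/K[t;\sigma](t^m - a)$.

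The one genuine input is the existence of a suitable $a$ making $f(t) = t^m - a$ irreducible, and this is precisely where primality of $m$ is used; I regard it as the main (and only real) point of the argument. I would pick any $a \in K \setminus F$, which is possible since $[K:F] = m \geq 2$. Because $m$ is prime, the intermediate field $F(a)$ satisfies $F \subsetneq F(a) \subseteq K$ with $[F(a):F] \mid m$, forcing $F(a) = K$; hence $a$ has degree $m$ over $F$ and $1,a,\dots,a^{m-1}$ are linearly independent over $F$. By the criterion recalled at the beginning of this section, $A$ is then a semifield. Moreover $f \in K[t;\sigma] \setminus F[t;\sigma]$ since $a \notin F$, so by Remark \ref{re:t}(iii) the polynomial $f$ is not right-invariant and $A$ is a \emph{proper} semifield.

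Next I would record the invariants of the loop $L = A^\times$. Since $A$ has $K$-dimension $m$ and $|K| = q^m$, we get $|L| = (q^m)^m - 1 = q^{m^2} - 1$. As already stated in this section, all three nuclei of $A^\times$ coincide with $K^\times \cdot 1 = \mathbb{F}_{q^m}^\times \cdot 1$. For the center, an element $c \in K = \mathrm{Nuc}(A)$ is central iff it commutes with $t$; from $tc = \sigma(c)t$ this forces $c = \sigma(c)$, i.e. $c \in F$, so the loop center is $F^\times \cdot 1 = \mathbb{F}_q^\times \cdot 1$, as required.

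Finally, the automorphism assertion is an immediate application of Proposition \ref{prop:t^m-a_automorphism_field finite}: taking a primitive element $\alpha$ of $K$, the subgroup $\langle G_\alpha\rangle \subset \mathrm{Aut}(A^\times)$ consists of inner automorphisms and is cyclic of order $s = (q^m-1)/(q-1)$. Since $s = 1 + q + \cdots + q^{m-1} \geq 2$ for $m \geq 2$, this subgroup is non-trivial, whence $\mathrm{Aut}(L)$ is non-trivial and contains the desired cyclic subgroup of inner automorphisms of order $s$. Beyond the existence step in the second paragraph I expect no obstacle; every remaining property is a direct consequence of the collected results, so the only care needed is to present the choices of $q$, $K$, $\sigma$, and $a$ cleanly and to note $s \geq 2$.
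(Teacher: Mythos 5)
Your proposal is correct and follows essentially the same route as the paper, whose proof is the one-line observation that $L$ is the multiplicative loop of a nonassociative cyclic division algebra of degree $m$ over $\mathbb{F}_q$, relying on the facts collected in Section \ref{sec:nonasscyclic} (in particular that primality of $m$ makes every $a\in K\setminus F$ yield a semifield, and Proposition \ref{prop:t^m-a_automorphism_field finite} for the cyclic subgroup $\langle G_\alpha\rangle$ of inner automorphisms of order $s$). You have merely made explicit the steps the paper leaves implicit --- the choice of $a$, properness via Remark \ref{re:t}(iii), the order and nuclei of $L$, the center computation, and $s\geq 2$ --- all of which are sound.
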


\begin{proof}
$L$ is the multiplicative loop of a nonassociative cyclic division algebras of degree $m$ over
$\mathbb{F}_q$.
\end{proof}

 \begin{example}
There can be  non-isomorphic Jha-Johnson semifields $S_f$ of a given order
$q^{m^2}$ with different nuclei. For example consider $q = 2$ and $n=m= 4$. Then for any $a$ not contained in any proper subfield of
$\mathbb{F}_{2^4}/\mathbb{F}_2$,
we have the nonassociative cyclic algebra $A=(\mathbb{F}_{2^4}/\mathbb{F}_2,\sigma,a)$ of degree 4.
$A^\times$ has $2^{16}-1$
elements, left, right and middle nucleus $\mathbb{F}_{2^4}^\times\cdot 1$ and center
$\mathbb{F}_2^\times\cdot 1$,
$$|{\rm Mlt}(A^\times)|=2^{16(16-1)/2}(2^{16}-1)(2^{15}-1)\cdots(2^2-1),$$
$$|{\rm Inn}(A^\times)|=2^{16(16-1)/2}(2^{15}-1)\cdots(2^2-1),$$ and
$ |\{T_c\in{\rm Inn}(L_f)\,|\, T_c \text{ automorphism}\}|\geq 15.$
Alternatively, we can  construct the multiplicative loop of a semifield with
$2^{16}-1$ elements choosing $q=16$ and $n=m=2$ (i.e., of a nonassociative quaternion algebra
$A=(\mathbb{F}_{2^8}/\mathbb{F}_{16},\sigma,a)$).
In this case the left, middle and right nucleus is $\mathbb{F}_{2^8}^\times\cdot 1$ and the center
$\mathbb{F}_{16}^\times\cdot 1$.
Then we get
$$16^{6}(16^4-1)(16^{3}-1)\cdots(16^2-1)\leq |{\rm Mlt}(A^\times)|\leq
16^{6}(16^4-1)(16^{3}-1)\cdots(16^2-1)15,$$
$$16^{6}(16^{3}-1)\cdots(16^2-1)\leq |{\rm Inn}(A^\times)|\leq
16^{6}(16^{3}-1)\cdots(16^2-1)15,$$
and $ |\{T_c\in{\rm Inn}(L_f)\,|\, T_c \text{ automorphism}\}|\geq 17.$
\end{example}

We obtain an upper bound on the number of non-isomorphic loops arising from
the nonassociative cyclic algebras of a fixed degree $m$  from  \cite[Theorem 31]{BPS}:

\begin{theorem} \label{numb}
Let $[K:F]=m$.
\\ (i)  If $m$  does not divide $q-1$
 then there are at most
\[\frac{q^m-q}{m(q-1)}\]
non-isomorphic multiplicative loops of order $q^{m^2}-1$ arising from nonassociative cyclic algebras $(K/F, \sigma, a)$ of degree $m$.
\\ (ii)
 If $m$ divides $q-1$ and is prime then there are at most
\[m-1 + \frac{q^m-q - (q-1)(m-1)}{m(q-1)}\]
non-isomorphic multiplicative loops of order $q^{m^2}-1$ arising from nonassociative cyclic algebras $(K/F, \sigma, a)$ of degree $m$.
\end{theorem}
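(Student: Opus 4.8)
The plan is to bound the number of \emph{loop}-isomorphism classes by the number of \emph{algebra}-isomorphism classes. Indeed, any algebra isomorphism $(K/F,\sigma,a)\cong(K/F,\sigma,b)$ restricts to an isomorphism of the multiplicative loops $L_{t^m-a}\cong L_{t^m-b}$, so non-isomorphic loops force non-isomorphic algebras; this inequality is exactly what produces the words ``at most'' (two algebras could well give isomorphic loops without being isomorphic). By the isomorphism criterion recalled above, $(K/F,\sigma,a)\cong(K/F,\sigma,b)$ precisely when $b=k\,\sigma^i(a)$ for some $k\in F^\times$ and some $0\le i\le m-1$. Hence the algebra-isomorphism classes are the orbits of the abelian group $G=F^\times\times\langle\sigma\rangle$, of order $m(q-1)$, acting on the set $V$ of admissible parameters by $(k,\sigma^i)\cdot a=k\,\sigma^i(a)$. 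Throughout I use that $m$ is prime, so that $F$ is the only field strictly between $F$ and $K$; then $t^m-a$ is irreducible exactly when $a\notin F$, giving $V=K\setminus F$ and $|V|=q^m-q$.

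The number of isomorphism classes is thus the number of $G$-orbits on $V$, which I would compute via stabilizers. The key tool is the homomorphism $\varphi\colon K^\times\to K^\times$, $\varphi(a)=\sigma(a)a^{-1}$, whose kernel is $F^\times$ and whose image is $\ker N_{K/F}$ by Hilbert~90; in particular every nonempty fibre of $\varphi$ has exactly $q-1$ elements. A short computation shows that $a\in V$ has nontrivial $G$-stabilizer if and only if $\varphi(a)\in F^\times$: for such $a$ one has $\sigma^i(a)=\varphi(a)^i a$, whence $\varphi(a)^m=1$, and $\varphi(a)\ne1$ because $a\notin F$. The stabilizer is then $\{(\varphi(a)^{-i},\sigma^i):0\le i\le m-1\}$, of order $m$, so the orbit has size $m(q-1)/m=q-1$ and, since $\varphi$ is constant on each orbit, coincides with the full fibre $\varphi^{-1}(\varphi(a))$.

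Let $\mu\subseteq F^\times$ be the group of $m$-th roots of unity, of order $\gcd(m,q-1)$. In case (i), $m\nmid q-1$ forces $\mu=\{1\}$, so no $a\in V$ has nontrivial stabilizer: the action is free, each orbit has size $m(q-1)$, and the number of orbits is
\[
\frac{|V|}{m(q-1)}=\frac{q^m-q}{m(q-1)}.
\]
In case (ii), $m\mid q-1$ gives $|\mu|=m$; each of the $m-1$ nontrivial $b\in\mu$ lies in $\ker N_{K/F}=\operatorname{im}\varphi$ because $N_{K/F}(b)=b^m=1$, so $\varphi^{-1}(b)$ is a single $G$-orbit of $q-1$ admissible parameters, all lying in $K\setminus F$. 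This accounts for $(m-1)(q-1)$ elements of $V$ in exactly $m-1$ orbits, while the remaining $q^m-q-(q-1)(m-1)$ elements carry a free action in orbits of size $m(q-1)$. Summing gives
\[
(m-1)+\frac{q^m-q-(q-1)(m-1)}{m(q-1)}
\]
orbits, as claimed.

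The main obstacle is the orbit/stabilizer bookkeeping of the second paragraph: one must verify carefully that the nontrivial values of $\varphi$ that can occur are exactly the nontrivial $m$-th roots of unity, that via Hilbert~90 each relevant fibre has precisely $q-1$ elements, and that such a fibre is a single orbit consisting of genuinely admissible parameters (degree-$m$ elements, not lying in a proper subfield). Once this is secured, the two formulas follow from the partition $|V|=\sum(\text{orbit sizes})$, and the reduction to algebra isomorphism from the first paragraph supplies the ``at most''.
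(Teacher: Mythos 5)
Your overall route is the same as the paper's: the paper proves this theorem simply by citing the count of isomorphism classes of the algebras $(K/F,\sigma,a)$ from \cite[Theorem 31]{BPS} together with the observation that an algebra isomorphism restricts to an isomorphism of multiplicative loops, so the number of loop classes is bounded by the number of algebra classes. You make exactly this reduction and then supply a self-contained proof of the cited count, which is the expected one: the criterion $(K/F,\sigma,a)\cong(K/F,\sigma,b)$ iff $\sigma^i(a)=kb$ (\cite[Corollary 34]{BP}, quoted in the paper) turns the count into counting orbits of $G=F^\times\times\langle\sigma\rangle$ on the admissible parameters, and your use of $\varphi(a)=\sigma(a)a^{-1}$ with Hilbert 90 correctly identifies the parameters with nontrivial stabilizer as the fibres $\varphi^{-1}(c)$ over the nontrivial $m$th roots of unity $c\in F^\times$, each a single orbit $F^\times a$ of size $q-1$. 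The bookkeeping in both cases is right, and in fact shows the algebra-level counts are exact, the ``at most'' coming only from the loop-versus-algebra reduction.

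One genuine caveat: you assume $m$ prime ``throughout'', whereas part (i) of the statement does not. Primality enters your argument in two essential places. First, in taking $V=K\setminus F$: for composite $m$, irreducibility of $t^m-a$ requires $a$ to avoid \emph{every} proper subfield of $K/F$, so $|V|<q^m-q$. Second, in the stabilizer analysis: if $(k,\sigma^i)$ with $i\neq 0$ fixes $a$, the set of such exponents $i$ is only a subgroup of $\mathbb{Z}/m\mathbb{Z}$, so one deduces $\sigma^d(a)a^{-1}\in F^\times$ for $d=\gcd(i,m)$, which for composite $m$ need not force $\varphi(a)\in F^\times$; your ``nontrivial stabilizer iff $\varphi(a)\in F^\times$'' uses that $1$ is the only proper divisor of $m$. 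This is not cosmetic: for $q=3$, $m=4$ (where $4\nmid q-1$), an element $a\in\mathbb{F}_{81}$ of order $16$ satisfies $\sigma^2(a)=-a$, so $(-1,\sigma^2)$ stabilizes $a$ and its orbit has size $4<m(q-1)=8$, while a check of the possible degree splittings $(1,3)$, $(2,2)$, $(3,1)$ shows $t^4-a$ is irreducible; hence the free-action count behind your case (i) breaks down for composite $m$. As written your proposal therefore proves (i) only for prime $m$ --- which matches the setting in which the cited count is actually established (note that (ii) explicitly assumes primality); the composite case of (i) would require a separate argument over the smaller admissible set, and it is not covered by your computation.
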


\subsection{The automorphism groups}

 In this subsection, we assume that  $F$ is a field where $m$ is coprime to the
characteristic of $F$, and that $F$ contains a primitive $m$th root of unity
$\omega$, so that $K = F(d)$  where $d$ is a root of some $t^m - c \in F[t]$. Let $s=(q^m-1)/(q-1)$.
The following results are implied by the theorems on the automorphism groups for the related algebras $S_f$ \cite[Theorems 19, 20]{BPS}:

\begin{theorem} \label{thm:semidirect}
Let $S_f = (K/F,\sigma,a)$  where $a = \lambda d^i$ for some
$i \in \{ 1 , \ldots, m-1 \}$, $\lambda \in F^{\times}$.
\\ (a)
Suppose $m$ is odd or $(q-1)/m$ is even.  Then $\mathrm{Aut}(L_f)$
 contains a subgroup isomorphic to the semidirect product
\begin{equation} \label{eqn:Automorphisms of nonassociative cyclic algebras over finite fields semidirect not nec prime}
\mathbb{Z} / \Big( \frac{s}{m} \Big) \mathbb{Z} \rtimes_{q} \mathbb{Z} / (m \mu) \mathbb{Z},
\end{equation}
where $\mu = m/\mathrm{gcd}(i,m)$.
Its automorphisms extend to automorphisms of $S_f$.
\\ (b) Suppose $m$ is prime and divides $ q-1$.
\begin{itemize}
\item[(i)] If $m = 2$ then $\mathrm{Aut}(L_f)$ contains a subgroup isomorphic to  the dicyclic group $\mathrm{Dic}_l$ of order $4l = 2q + 2$.
\item[(ii)] If $m > 2$ then $\mathrm{Aut}(L_f)$ contains a subgroup isomorphic to  the semidirect product
\begin{equation} \label{eqn:Automorphisms of nonassociative cyclic algebras over finite fields semidirect}
 \mathbb{Z} / \Big( \frac{s}{m} \Big) \mathbb{Z} \rtimes_{q} \mathbb{Z} / (m^2) \mathbb{Z}.
\end{equation}
\end{itemize}
The automorphisms in these subgroups extend to automorphisms of $S_f$.
\end{theorem}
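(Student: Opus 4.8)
The plan is to obtain the whole statement by restriction from the automorphism group of the ring $S_f$, whose structure for these $f$ is already recorded in \cite[Theorems 19, 20]{BPS}. Since here $\sigma$ has order $m$ and $[K:F]=m$, we are in the case $n=m\geq m-1$, so Theorem \ref{thm:automorphism_of_Sf_field_case}(i) describes $\mathrm{Aut}(S_f)$ completely: its elements are exactly the maps $H_{\tau,k}$ with $\tau\in\mathrm{Aut}(K)$ and $k\in K^\times$ satisfying \eqref{eqn:neccessary}. For $f(t)=t^m-a$ we have $a_0=a$ and $a_i=0$ for $1\leq i\leq m-1$, so \eqref{eqn:neccessary} reduces to the single relation $\tau(a)=N_{K/F}(k)\,a$, using $\prod_{l=0}^{m-1}\sigma^l(k)=N_{K/F}(k)$. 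One checks on the generators $K$ and $t$ that $H_{\tau,k}\circ H_{\tau',k'}=H_{\tau\tau',\,k\,\tau(k')}$ (admissibility of the product pair follows since $\tau$ and $\sigma$ commute), so $\mathrm{Aut}(S_f)$ is the group of admissible pairs $(\tau,k)$ under this law.

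First I would make the transfer to $L_f$ precise. Restriction $\rho\colon\mathrm{Aut}(S_f)\to\mathrm{Aut}(L_f)$, $H\mapsto H|_{L_f}$, is well defined by Corollary \ref{cor:automorphism_of_Lf_field_case}; it is a homomorphism because restriction commutes with composition, and it is injective because a ring automorphism fixes $0$ and is therefore determined by its action on $L_f=S_f\setminus\{0\}$. Hence $\rho$ embeds $\mathrm{Aut}(S_f)$ as a subgroup of $\mathrm{Aut}(L_f)$, and every automorphism in its image extends, by construction, to an automorphism of $S_f$.

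With $\rho$ in hand the theorem is immediate: under the standing hypotheses ($F$ containing a primitive $m$th root of unity, $K=F(d)$ with $d^m=c\in F$, and $a=\lambda d^i$) \cite[Theorems 19, 20]{BPS} exhibit inside $\mathrm{Aut}(S_f)$ a subgroup isomorphic to the semidirect product $\mathbb{Z}/(s/m)\mathbb{Z}\rtimes_q\mathbb{Z}/(m\mu)\mathbb{Z}$ in case (a), and to the dicyclic group $\mathrm{Dic}_l$ (when $m=2$) or the semidirect product $\mathbb{Z}/(s/m)\mathbb{Z}\rtimes_q\mathbb{Z}/(m^2)\mathbb{Z}$ (when $m>2$) in case (b). Pushing these subgroups through the embedding $\rho$ yields isomorphic copies inside $\mathrm{Aut}(L_f)$, which is exactly the assertion.

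The only genuine work is therefore the internal identification of these groups for $S_f$, and that is where I expect the obstacle to lie rather than in the transfer. One builds the normal cyclic factor from the norm-one inner automorphisms $H_{\mathrm{id},k}$, which by Proposition \ref{prop:t^m-a_automorphism_field finite} form a cyclic group of order $s$, and the complementary factor from a single $H_{\tau,k}$ combining a power of $\sigma$ with scaling by $d$ and by a root of unity; the precise order $m\mu$ of the complement, the amalgamation that drops the normal factor from $s$ to $s/m$, and the twisting action $\rtimes_q$ all fall out of analysing the relation $\tau(a)=N_{K/F}(k)\,a$ for $a=\lambda d^i$. The most delicate point is the parity analysis separating the split case (a) from the non-split case $m=2$: whether $(q-1)/m$ is even decides whether the second generator admits a complement or instead squares into the normal subgroup, forcing the dicyclic group $\mathrm{Dic}_l$ of order $2q+2$ in place of a semidirect product. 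Since these computations are carried out for $S_f$ in \cite{BPS}, for $L_f$ they need only be invoked through $\rho$.
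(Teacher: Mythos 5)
Your proposal matches the paper's own treatment: the paper likewise obtains Theorem \ref{thm:semidirect} by invoking \cite[Theorems 19, 20]{BPS} for the subgroup structure inside $\mathrm{Aut}(S_f)$ and transferring it to $\mathrm{Aut}(L_f)$ via restriction (justified by Corollary \ref{cor:automorphism_of_Lf_field_case}). Your additional verification that the restriction map is an injective homomorphism, and your sketch of where the internal computation lives, are correct but simply make explicit what the paper leaves implicit.
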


\begin{corollary}
Let $F$ have characteristic not $2$,
$K/F$ be a quadratic field extension
and $S_f=(K/F, \sigma, a)$ a nonassociative quaternion algebra.
\\ (i) If
 $a\not=\lambda d$
 for any  $\lambda\in F^\times$, then ${\rm Aut}(L_f)$ contains a subgroup isomorphic to the cyclic group
 $\mathbb{Z}/(q+1) \mathbb{Z}.$
  All the automorphisms in this subgroup are inner and extend to automorphisms of $S_f$.
\\ (ii) If  $a=\lambda d$ for some  $\lambda\in F^\times$, then
${\rm Aut}(L_f)$ contains  a subgroup isomorphic to
the dicyclic group of order $2q+2$ and all its elements extend to automorphisms of $S_f$.
\end{corollary}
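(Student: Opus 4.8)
The plan is to specialize the degree-$m$ results of this section to the quadratic case $m=2$ and to read the two regimes directly off Equation \eqref{eqn:neccessary}. First I would record the arithmetic of the situation: since $[K:F]=2$ we have $s=(q^{2}-1)/(q-1)=q+1$, and since $F$ has characteristic not $2$ its order $q$ is odd, so $2\mid q-1$. Writing $K=F(d)$ with $d^{2}=c\in F$, the nontrivial element $\sigma$ of ${\rm Gal}(K/F)$ satisfies $\sigma(d)=-d$, while $N_{K/F}(k)=k\sigma(k)$ is surjective onto $F^{\times}$ with cyclic kernel of order $q+1$. Throughout, $L_f$ has nucleus $K^{\times}\cdot 1$ because $S_f$ is a nonassociative cyclic algebra, as recorded at the start of this section.

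For part (i) I would invoke Proposition \ref{prop:t^m-a_automorphism_field finite} (equivalently Theorem \ref{cor:innerII}) with $m=2$: it produces a cyclic subgroup of inner automorphisms of $L_f$ of order $s=q+1$, namely the maps $H_{id,k}$ with $N_{K/F}(k)=1$, isomorphic to ${\rm ker}(N_{K/F})\cong\mathbb{Z}/(q+1)\mathbb{Z}$, each the restriction of an inner automorphism of $S_f$. This already establishes (i); the hypothesis $a\neq\lambda d$ plays no part in producing this subgroup, but it is precisely what forbids the larger group of (ii), as explained below.

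For part (ii) the hypothesis $a=\lambda d$ is exactly the case $i=1$ of Theorem \ref{thm:semidirect}, the only admissible value of $i$ when $m=2$. Since $m=2$ is prime and $2\mid q-1$, part (b)(i) of that theorem applies verbatim and supplies a subgroup of ${\rm Aut}(L_f)$ isomorphic to the dicyclic group $\mathrm{Dic}_l$ of order $4l=2q+2$, all of whose elements extend to automorphisms of $S_f$. This proves (ii); observe that its cyclic subgroup of order $2l=q+1$ is exactly the group found in (i), so (ii) is genuinely richer.

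The conceptual heart, and the step I would spell out carefully, is why the two hypotheses yield different groups; I would trace this through Equation \eqref{eqn:neccessary} for $f(t)=t^{2}-a$. Here $a_0=a$ and $a_1=0$, so the sole constraint on a candidate $H_{\tau,k}$ is $\tau(a)=N_{K/F}(k)\,a$. Taking $\tau=id$ forces $N_{K/F}(k)=1$ and recovers the inner automorphisms of (i) in both cases. Taking $\tau=\sigma$ gives $\sigma(a)=N_{K/F}(k)\,a$: if $a=\lambda d$ then $\sigma(a)=-a$, which is solvable by any $k$ with $N_{K/F}(k)=-1$ (such $k$ exists by surjectivity of the norm), and this extra, non-inner automorphism is the second generator making the group dicyclic; if instead $a=\alpha+\beta d$ with $\alpha,\beta\in F^{\times}$, then comparing the coefficients of $1$ and of $d$ in $\sigma(a)=N_{K/F}(k)\,a$ would simultaneously force $N_{K/F}(k)=1$ and $N_{K/F}(k)=-1$, so no automorphism with $\tau=\sigma$ exists and only the cyclic group survives. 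The one real obstacle is bookkeeping: confirming that adjoining this $\tau=\sigma$ element to the order-$(q+1)$ cyclic group produces precisely the dicyclic group of order $2q+2$ with its defining relation $b^{-1}ab=a^{-1}$; but that structural identification is exactly what Theorem \ref{thm:semidirect}(b)(i) already delivers, so here it suffices to cite it.
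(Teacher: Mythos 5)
Your proposal is correct and matches the paper's (implicit) proof exactly: the corollary is stated as a direct specialization to $m=2$, with part (i) coming from Proposition \ref{prop:t^m-a_automorphism_field finite} (giving the cyclic group of inner automorphisms of order $s=q+1$) and part (ii) from Theorem \ref{thm:semidirect}(b)(i) applied with $i=1$, the only possible value when $m=2$, noting that $2\mid q-1$ since $q$ is odd. Your supplementary analysis of Equation \eqref{eqn:neccessary} for $f(t)=t^2-a$ (showing $\tau=\sigma$ forces $N_{K/F}(k)=\pm 1$ simultaneously when $a=\alpha+\beta d$ with $\alpha,\beta\in F^\times$, hence is admissible only when $a=\lambda d$) is accurate and consistent with the underlying results of \cite{BPS}, though the paper itself leaves that bookkeeping to the cited theorems.
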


Note that nonassociative quaternion algebras (where $m=2$ in  Theorem \ref{thm:semidirect}) are up to isomorphism the only
  proper semifields of order $q^4$ with center $F\cdot 1$ and nucleus containing the quadratic field extension $K$ of $F$
  (embedded into the algebra as $K\cdot 1$ as usual)
\cite[Theorem 1]{W}.
 For the loop $L_f$ of a nonassociative quaternion algebra, moreover
$$q^{6}(q^4-1)(q^{3}-1)(q^2-1)\leq |{\rm Mlt}(L_f)|\leq q^{6}(q^4-1)(q^{3}-1)(q^2-1)(q-1).$$

\begin{example}\label{ex:quat2}
Let $F= \mathbb{F}_2$ and let $K = \mathbb{F}_4$, then
$K = \{0,1,x, 1+x \}$ where $x^2+x+1 = 0$. There is up to isomorphism only
one nonassociative quaternion algebra  which can be constructed
using $K/F$ given by $(K/F,\sigma,x)  $  \cite[Example 30]{BPS}.
This is up to isomorphism the only proper semifield of order $16$ with center $F\cdot 1$ and nucleus containing $K\cdot 1$.

Its $F$-automorphism group consists of inner automorphisms and is
isomorphic to $\langle G_x \rangle\cong \mathbb{Z}/3\mathbb{Z}$. Its multiplicative loop $L$ thus has 15
elements, left, right and middle nucleus
$ \mathbb{F}_4^\times\cdot 1$, center $ \mathbb{F}_2^\times\cdot 1$ and non-trivial automorphism group containing
$\langle G_x \rangle$. $L$ is right cyclic. The inner mapping group of $L$ contains a
cyclic subgroup of inner automorphisms
isomorphic to $\mathbb{Z}/3\mathbb{Z}$ induced by the inner automophisms of the algebra.
We know
$|{\rm Inn}(L)|=
1\,344$ and
$|{\rm Mlt}(L)|=
20\,160.$
\end{example}

\begin{example} \label{ex:quat3}
Let $F = \mathbb{F}_3$ and $K = \mathbb{F}_9$, i.e. $K = F[x]/(x^2 - 2) = \{0,1,2,x,2x,$
$ x+1, x+2, 2x+1, 2x+2\}.$
There are exactly two non-isomorphic semifields which are nonassociative quaternion algebras with nucleus
$K\cdot 1$, given by
$A_1 = (K/F, \sigma, x)$ with ${\rm Aut}_F(A_{1})\cong \mathbb{Z}/4 \mathbb{Z}$  and by
$A_{2}= (K/F, \sigma, x+1)$ with
${\rm Aut}_F(A_2)$  isomorphic to the group  of quaternion units,
the smallest dicyclic group ${\rm Dic}_2$ (of order 8)   \cite[Example 32]{BPS}.
These are up to isomorphism the only two proper semifields of order $81$ with center $F \cdot 1$ and nucleus containing $K\cdot 1$
\cite[Theorem 1]{W}. They are listed as cases (X) and (XI) in \cite{Dem}.

 Their multiplicative loops $L_1$ and $L_2$ both have order 80,
 left, middle and right nucleus $\mathbb{F}_9^\times\cdot 1$ and center $\mathbb{F}_3^\times\cdot 1$.
 Both are left and right cyclic. Their multiplicative mapping group has
$ 12 \,130 \,560$
 elements and their inner mapping group has
$151 \,632$ elements.
 The  automorphism group of $L_1$ and the  inner mapping group of $L_1$ contain
  a subgroup of inner automorphisms which is isomorphic to $\mathbb{Z}/4 \mathbb{Z}$, while
the automorphism group of $L_2$ contains a subgroup of order 8 isomorphic  to the group  of quaternion units
 ${\rm Dic}_2$, all of whose elements can be extended to automorphisms of $S_f$.
\end{example}

\begin{example} \label{ex:quat4}
Let $F = \mathbb{F}_5$ and $K = \mathbb{F}_{25}\cong\mathbb{F}_5(\sqrt{2})$. Then
 $$(\mathbb{F}_5(\sqrt{2})/\mathbb{F}_5,\sigma,\sqrt{2})$$ is a
 nonassociative quaternion division algebra with $|{\rm Aut}(L_f)|\geq 12$
 and $$(\mathbb{F}_5(\sqrt{2})/\mathbb{F}_5,\sigma,1+2\sqrt{2})$$ is a
 nonassociative quaternion division algebra with $|{\rm Aut}(L_f)|\geq 6$. The algebras are non-isomorphic
 \cite[Example 13]{W09}.
 Their multiplicative loops have order $5^4-1=624$, left, middle and right nucleus $\mathbb{F}_{25}^\times\cdot 1$
 and center $\mathbb{F}_5^\times\cdot 1$.
  Their multiplicative mapping group has
$29\,016\,000\,000$
 elements and their inner mapping group
$ 46\,500\,000$ elements.
\end{example}

%%%%%%%%%%%%%%%%%%%%%%%%%%%%%%%%%%%%%%%%%%%%%%%%%%%%%%%%%%%%%%%%%%%%%%%%%%%%%%%%%%%%%%%%%%%%%
%
% Isotopies
%
%%%%%%%%%%%%%%%%%%%%%%%%%%%%%%%%%%%%%%%%%%%%%%%%%%%%%%%%%%%%%%%%%%%%%%%%%%%%%%%%%%%%%%%%%%%%%%

\section{Isotopies}\label{sec:isotopies}

Two semifields $A$ and $A'$ are \emph{isotopic} if there are linear bijective maps $F,G,H:A\longrightarrow A'$ such that
for all $x,y\in A$, we have $F(x)G(y)=H(xy)$. The triple $(F,G,H)$ is called an \emph{isotopism}. Any
 isotopy of semifields canonically induces an isotopy of the multiplicative loops $L$ and $L'$ of $A$ and $A'$.

Note that if $f$ and $g$ are \emph{similar} irreducible monic polynomials of degree $m$, i.e.
$gu \equiv 0\text{ mod }f$ for some $0\not=u\in R_m$, then $L_f$ and $L_g$ are isotopic loops by \cite[Theorem 7]{LS}.

Indeed, Kantor showed that there
 are less than $r\sqrt{log_2(r)}$  non-isotopic semifields of order $r$ that can be
 obtained through our construction \cite{K}, so that there are less than $r\sqrt{log_2(r)}$  non-isotopic loops of
 order $r-1$ obtained as their multiplicative loops.
By \cite[Section 8]{LS}, we have some more upper bounds on the number of isotopic loops which can be obtained as
 multiplicative loops of Jha-Johnson semifields:

 Let $N(q,m)=|I(q,m)|$ where
 $$|I(q,m)|=\frac{1}{m}\sum_{l/m}\mu(l) q^{m/l}=\frac{q^m-\theta}{m}$$
  is the number of monic irreducible polynomials of degree $m$ in the center of $R$ which is
  $\mathbb{F}_q[y]\cong \mathbb{F}_q[t^n;\sigma]$. Here, $\mu$ is the Moebius function, and
  $\theta$ the number of elements of $\mathbb{F}_{q^m}$ contained in a proper subfield  $\mathbb{F}_{q^e}$
  of  $\mathbb{F}_{q^m}$.
 Let $A(q,n,m)$ denote the number of isotopy classes of semifields $A=K[t;\sigma]/K[t;\sigma]f(t)$ of order $q^{nm}$
 defined using $f(t)\in K[t;\sigma]$
 as in the previous sections, with
 $$(|C(A)|,|{\rm Nuc}_l(A)|,|{\rm Nuc}_m(A)|,|{\rm Nuc}_r(A)| )=(q,q^n,q^n,q^m).$$
 Then
 $$A(q,n,m)\leq N(q,m)=\frac{q^m-\theta}{m}$$
 and, even stronger,
 $$A(q,n,m)\leq M(q,m)$$
 with $M(q,d)$ being the number of orbits in $I(q,m)$ under the action of the group
 $$G=\Gamma L(1,q)=\{(\lambda,\rho)\,|\,\lambda\in\mathbb{F}_q^\times,\rho\in{\rm Aut}(\mathbb{F}_q)\}$$
 on $I(q,m)$ defined via
 $$f^{(\lambda,\rho)}(y)=\lambda^{-m}f^\rho(\lambda y).$$

 For $q=p^r$ we have
 $$\frac{q^m-\theta}{mr(q-1)}\leq  M(q,m)\leq \frac{q^m-\theta}{m}$$
\cite{LS}.
These are also upper bounds for the number of non-isotopic multiplicative loops $A^\times$
of order $q^{nm}-1$ defined using $A$.

\begin{example}
(i) For $q=2,3,4,5$ and $n=m=2$ we have the tight bound $M(q,m)=1,2,1,3$ \cite{LS} and so there is up to isotopy exactly one
loop $L$ with
 $$(|C(L)|,|{\rm Nuc}_l(L)|,|{\rm Nuc}_m(L)|,|{\rm Nuc}_r(L)| )=(q-1,q^2-1,q^2-1,q^2-1)$$
 that is the multiplicative loop of a Jha-Johnson semifield when $q=2,4$, and at most 2 (respectively 3) loops, when
$q=3$ (respectively $5$).

When $q=2$ this loop is given by the invertible elements in the nonassociative quaternion algebra in Example \ref{ex:quat2}.
When $q=3$ (respectively, $5$) there are the two non-isomorphic, and hence necessarily also non-isotopic, nonassociative quaternion
algebras  in Examples \ref{ex:quat3} and \ref{ex:quat4}.
It is not clear if the corresponding two multiplicative loops are isotopic or not.
\\ (ii) If $q=p$ and $gcd(p-1,m)=1$ then
$$M(p,m)=\frac{N(p,m)}{(p-1)}=\frac{q^m-\theta}{m(p-1)}$$
and so there are at most $(q^m-\theta)(m(p-1))$ non-isotopic loops $L$ of order $p^{mn}-1$
with
 $$(|C(L)|,|{\rm Nuc}_l(L)|,|{\rm Nuc}_m(L)|,|{\rm Nuc}_r(L)| )=(p-1,p^n-1,p^n-1,p^m-1)$$
which  arise as the multiplicative loop of a Jha-Johnson semifield.
\end{example}

We conclude by noting that it would be interesting to also obtain lower bounds for the number of isotopy classes of loops arising from this
construction.

%*******************************************************************************************%
%****************************************************************************************%

%*******************************************************************************************%
%****************************************************************************************%


\begin{thebibliography}{1}

\bibitem{CB} C. Brown, \emph{Petit's algebras and their automorphisms.} PhD Thesis, University of Nottingham, submitted.

\bibitem{BP} C. Brown, S.~Pumpl\"un, \emph{The automorphisms of Petit's algebras.}
 Comm. Algebra  46 (2) (2018), 834-849. %doi:10.1080/00927872.2017.1327598

\bibitem{BPS} C. Brown, S.~Pumpl\"un, A. Steele,
\emph{Automorphisms and isomorphisms of  Jha-Johnson semifields obtained from skew polynomial rings.}
Online at arXiv:1703.02356v2 [math.RA]

\bibitem{CJ} M. Cordero, V. Jha,  \emph{On the multiplicative structure of quasifields and semifields:
cyclic and acyclic loops.} Note Mat. 29 (1) (2009),  45-58.


\bibitem{Dem} U. Dempwolff, \emph{Semifield planes of order 81.} J. Geom. 89 (2008), 1-2, 1-16.


\bibitem{HR} I. R. Hentzel, I. F. R\'{u}a, \emph{Primitivity of finite semifields with 64 and 81 elements.}
Internat. J. Algebra Comput. 17 (7) (2007), 1411-1429.

\bibitem{HK} D. R. Hughes, E. Kleinfeld,  \emph{Seminuclear extensions of Galois fields.}
Amer. J. Math. 82 (1960) 389-392.

\bibitem{J96} N.~Jacobson,
``Finite-dimensional division algebras over fields,'' Springer Verlag,
Berlin-Heidelberg-New York, 1996.



\bibitem{K} W. M. Kantor, R. A. Liebler,  \emph{Semifields arising from irreducible semilinear transformations.}
 J. Aust. Math. Soc. 85 (3) (2008), 333-339.


\bibitem{Ki} J. Kirshtein, \emph{Multiplication groups and inner mapping groups of Cayley-Dickson loops.}
J. Algebra Appl. 13 (1) (2014), 26 pages.

\bibitem{Kish} K. Kishimoto, \emph{On cyclic extensions of simple rings.}
J. Fac. Sci. Hokkaido Univ. Ser. I 19 (1966), 74-85.


\bibitem{Kn65} D. E. Knuth, \emph{Finite semifields and projective planes}.
J. Algebra 2 (1965) 182-217.


\bibitem{LS} M.~Lavrauw, J.~Sheekey, \emph{Semifields from skew-polynomial rings}. Adv. Geom. 13 (4) (2013), 583-604.

\bibitem{MW}  B. D. McKay, I. M. Wanless, \emph{On the number of Latin squares.}
 Ann. Comb. 9 (3) (2005),  335-344.

\bibitem{NG} G. P.~Nagy,  \emph{Semifields in loop theory and in finite geometry.} Quasigroups Related Systems 19 (1) (2011),
 109-122.

\bibitem{NG2} G. P.~Nagy, \emph{On the multiplication groups of semifields.} European J. Combin. 31 (1) (2010),  18-24.  12K10 (20N05)

 \bibitem{O1} O.~Ore, \emph{Theory of noncommutative polynomials.} Annals of Math. (1933), 480-508.

\bibitem{P66} J.-C.~Petit, \emph{Sur certains quasi-corps g\'{e}n\'{e}ralisant un type d'anneau-quotient}.
 S\'{e}minaire Dubriel. Alg\`{e}bre et th\'{e}orie des nombres 20 (1966 - 67), 1-18.


\bibitem{San62} R.~Sandler, \emph{Autotopism groups of some finite non-associative algebras}.
Amer. J. Math. 84 (1962), 239-264.

\bibitem{Sch} R. D.~Schafer, ``An Introduction to Nonassociative Algebras.'' Dover Publ., Inc., New York, 1995.

\bibitem[W] {W}   W. C.~Waterhouse,  {\it Nonassociative quaternion algebras}. Algebras, Groups
and Geometries 4 (1987), 365-378.

\bibitem{W00}
 G. P.~Wene, \emph{Finite semifields three-dimensional over the left nuclei}.
Nonassociative algebra and its applications
 (Sao Paulo, 1998), Lecture Notes in Pure and Appl. Math., 211, Dekker, New York, 2000, 447-456.

\bibitem{W09} G. P.~Wene, \emph{Inner automorphisms of  semifields}. Note Mat. 29 (2009), suppl. 1, 231-242.





\end{thebibliography}
\end{document}